\documentclass[12pt,a4paper]{amsproc}
\usepackage[english]{babel}
\usepackage{amssymb}
\usepackage{amscd}
\usepackage[mathscr]{eucal}
\usepackage{amssymb,amsmath,amsthm, accents}
\usepackage{tikz-cd}
\usepackage{pgfplots,tikz}
\usetikzlibrary{decorations.pathmorphing,patterns}
\usepackage{afterpage}
\usepackage{extarrows}

\usepackage{geometry}
\newtheorem{thm}{Theorem}[section]

\newtheorem{prop}[thm]{Proposition}

\theoremstyle{remark}
\newtheorem{rem}[thm]{Remark}

\theoremstyle{definition}
\newtheorem{defi}[thm]{Definition}

\newcommand{\cL}{\mathcal{L}}
\newcommand{\cN}{\mathcal{N}}

\renewcommand\Im{\operatorname{Im}}

\newcommand{\Ext}{\operatorname{Ext}}

\newcommand{\QBan}{\operatorname{\textbf{QB}}}
\newcommand{\pBan}{\operatorname{\textbf{pB}}}
\newcommand{\Ban}{\operatorname{\textbf{B}}}

\newcommand{\Id}{\operatorname{Id}}

\newcommand\restr[2]{{
		\left.\kern-\nulldelimiterspace 
		#1 
		\right|_{#2} 
}}
\newcommand{\norm}[1]{{\left\vert\kern-0.25ex\left\vert #1 
		\right\vert\kern-0.25ex\right\vert}}
\newcommand{\nnorm}[1]{{\left\vert\kern-0.25ex\left\vert\kern-0.25ex\left\vert #1 
		\right\vert\kern-0.25ex\right\vert\kern-0.25ex\right\vert}}

\numberwithin{equation}{section}

\usepackage[colorlinks=true,
            linkcolor=blue,
            citecolor=blue,
            urlcolor=blue]{hyperref}
            
\begin{document}
	\title[Universal $\delta$-functors and relative derivation in $\pBan$]{Universal $\delta$-functors and relative injective objects \\ in the category of $p$-Banach spaces}
	
	\author[N. Trejo-Arroyo]{Nazaret Trejo-Arroyo}
	\address{Universidad Complutense de Madrid\\ Plaza de las Ciencias, s/n \\ 28040-Madrid\\ Spain} \email{ntrejo@ucm.es}

    \subjclass[2020]{Primary 46M15, 46M18 Secondary 18A30}
    \keywords{$p$-Banach space, derived functor, $\delta$-functor, injective object, injective presentation, long homology sequence.}
    \thanks{This research has been supported by a predoctoral grant associated to ``Programa de financiación de UCM-Banco Santander (CT24/25)''}
	\maketitle
\begin{abstract}
Inspired by Grothendieck's foundational work \cite{tohoku}, we establish that the sequence of functors $(\Ext^n_{\pBan}(E,-))_{n \ge 0}$ on the category of $p$-Banach spaces ($0 < p < 1$) forms a universal $\delta$-functor. To this end, for any pair of $p$-Banach spaces $E$ and $X$, we construct a  $p$-Banach space $\mathcal N_E^X$ that renders each functor $\Ext^n_{\pBan}(E,-)$ \emph{effaceable}. Furthermore, we show that this space serves as a relative injective object for $E$ and $X$, enabling the construction of relative injective presentations and proving that the functors $\Ext^n_{\pBan}(E,-)$ are the right derived functors of $\mathcal{L}(E,-)$. We will also examine the situation in the category of quasi-Banach spaces.
\end{abstract}  
	\section{Introduction}
	The present paper deals with the derived functors of the covariant functors $\mathcal L_{\pBan}(E,-)$ (of linear and continuous operators) defined on the category of $p$-Banach spaces ($0<p<1$). It is an established fact that the are no non-zero injective objects in the category of $p$-Banach spaces and operators (see \cite[2.9.1]{hmbst}). Under such conditions we cannot carry out the classical derivation process of a covariant, left-exact functor defined in $\pBan$. However, for any $n\in \mathbb N$, we can always consider the Yoneda (vector) spaces $\Ext^n_{\pBan}(X,Y)$ for any given $p$-Banach spaces $X$ and $Y$, and construct the corresponding ``long homology sequence'' associated to the functor $\mathcal{L}_{\pBan}(E,-)$, for $E$ a fixed $p$-Banach space. That is, given a short exact sequence of $p$-Banach spaces $\begin{tikzcd}[column sep=1.5em] 0 \arrow[r] & Y \arrow[r] & X \arrow[r] & Z \arrow[r] & 0\end{tikzcd}$, the following sequence
	\begin{equation}
		\label{eq:longhomology}
		\begin{tikzcd}
			& \cdots \arrow[r] 			\arrow[dl, phantom, ""{coordinate, name=Z}]                    & {\Ext_{\pBan}^n(E,Z)} \arrow[dll,
			"\omega^n" description, rounded corners=6pt,
			to path={ -- ([xshift=6.5ex]\tikztostart.east)
				|- (Z) [very near end] \tikztonodes
				-| ([xshift=-3ex]\tikztotarget.west)
				-- (\tikztotarget)}]        \\
			{\Ext_{\pBan}^{n+1}(E,Y)} \arrow[r] & {\Ext_{\pBan}^{n+1}(E,X)} \arrow[r] \arrow[dl, phantom, ""{coordinate, name=ZZ}] & {\Ext_{\pBan}^{n+1}(E,Z)} \arrow[dll,
			"\omega^{n+1}" description, rounded corners=6pt,
			to path={ -- ([xshift=6.5ex]\tikztostart.east)
				|- (ZZ) [very near end] \tikztonodes
				-| ([xshift=-3ex]\tikztotarget.west)
				-- (\tikztotarget)}]  \\
			{\Ext_{\pBan}^{n+2}(E,Y)} \arrow[r] & \cdots                              &
		\end{tikzcd}
	\end{equation}
	is exact. This fact naturally leads one to ask in which sense the functors $\Ext^n_{\pBan}(E,-)$ could be considered the right derived functors of $\mathcal{L}_{\pBan}(E,-)$. 
	\par Our main objective is to construct, for $E$ and $X$ two fixed $p$-Banach spaces, another $p$-Banach space $\mathcal N_E^X$ which behaves like an injective object for these two particular spaces (see Section \ref{main} below). By means of such objects, we then adapt the classical derivation process of the functor $\mathcal{L}_{\pBan}(E,-)$ and recover the functors $\Ext^n_{\pBan}(E,-)$ as derived functors in Section \ref{derived}. The construction of the space $\mathcal N_E^X$ follows the spirit of Grothendieck's paper \cite{tohoku} and the notion of \emph{universal $\delta$-functor}, as we make explicit in Section \ref{grothendieck}.

    \par On the other hand, we can also constrct a sequence of the form
    \begin{equation}
		\begin{tikzcd}
			& \cdots \arrow[r] 			\arrow[dl, phantom, ""{coordinate, name=Z}]                    & {\Ext_{\QBan}^n(E,Z)} \arrow[dll,
			"\omega^n" description, rounded corners=6pt,
			to path={ -- ([xshift=6.5ex]\tikztostart.east)
				|- (Z) [very near end] \tikztonodes
				-| ([xshift=-3ex]\tikztotarget.west)
				-- (\tikztotarget)}]        \\
			{\Ext_{\QBan}^{n+1}(E,Y)} \arrow[r] & {\Ext_{\QBan}^{n+1}(E,X)} \arrow[r] \arrow[dl, phantom, ""{coordinate, name=ZZ}] & {\Ext_{\QBan}^{n+1}(E,Z)} \arrow[dll,
			"\omega^{n+1}" description, rounded corners=6pt,
			to path={ -- ([xshift=6.5ex]\tikztostart.east)
				|- (ZZ) [very near end] \tikztonodes
				-| ([xshift=-3ex]\tikztotarget.west)
				-- (\tikztotarget)}]  \\
			{\Ext_{\QBan}^{n+2}(E,Y)} \arrow[r] & \cdots                              &
		\end{tikzcd}
	\end{equation}
in the category of quasi-Banach spaces. Therfore, the natural question is now the same: in which sense the functors $\Ext^n_{\QBan}(E,-)$ are the derived functors of $\mathcal{L}_{\QBan}(E,-)$? However, in $\QBan$ the scene is even worse, since there are neither projective nor injective objects. We analyze the homological behaviour of the category $\QBan$ in Section \ref{sec:QBan}. 

	\section{Preliminaries}
    \subsection{Our categories} We will mainly deal with the categories $\boldsymbol{\pBan}$, for $0 <p \leq 1$, whose objects are $p$-Banach spaces and whose morphisms are (linear, continuous) operators, and $\boldsymbol{\QBan}$, whose objects are quasi-Banach spaces and whose morphisms are again operators. Given $X$ and $Y$ $p$-Banach spaces (respectively, quasi-Banach spaces), we will denote $\mathcal{L}_{\boldsymbol{\pBan}}(X,Y)$ (respectively,  $\mathcal{L}_{\boldsymbol{\QBan}}(X,Y)$) the collection of operators from $X$ to $Y$. For $p=1$, $\boldsymbol{\pBan}$ is just $\boldsymbol{\Ban}$, the category of Banach spaces and operators. It is an important and useful remark that every $p$-Banach space is also a $r$-Banach spaces, for $0<r<p \leq 1$. Moreover, it holds that every $p$-Banach space is a quasi-Banach space and, given a quasi-Banach space $X$, the classical result of Aoki and Rolewicz \cite[Theorem 1.2]{fspace} guarantees that there always exists $p\in (0,1]$ such that the quasi-norm defined on $X$ is equivalent to a $p$-norm. For this reason, we will focus primarily on the category $\boldsymbol{\pBan}$ of $p$-Banach spaces until Section~\ref{sec:QBan}, which is dedicated to quasi-Banach spaces. Nevertheless, all the constructions introduced in Sections~\ref{sec:pre_2.2}--\ref{sec:pre_2.5} can also be carried out in $\boldsymbol{\QBan}$.

\subsection{The quasi-abelian structure of \texorpdfstring{$\boldsymbol{\pBan}$}{pBan}} \label{subsec:quasi-abelian} \label{sec:pre_2.2}

We recall that an additive category with finite kernels and cokernels is \emph{quasi-abelian} if strict monomorphisms are stable under pushouts and strict epimorphisms are stable under pullbacks (see \cite{buehler2010, schneiders1999} for details). Recall also that, in a quasi-abelian category, a morphism $T: X \to Y$ is called a \emph{strict morphism} if, and only if, the canonical map $\overline{T}: \mathrm{Coim}(T) \to \mathrm{Im}(T)$ is an isomorphism, where $\mathrm{Coim}(T) = \mathrm{coker}(\ker(T))$ and $\mathrm{Im}(T) = \ker(\mathrm{coker}(T))$. 

Furthermore, $T$ is a \emph{strict monomorphism} if, and only if, $T$ is a strict morphism and $\ker(T) = 0$, so $T \cong \mathrm{Im}(T)$. On the other hand, $T$ is a \emph{strict epimorphism} if, and only if, $T$ is a strict morphism and $\mathrm{coker}(T) = 0$, so $\mathrm{Coim}(T) \cong Y$.
\par Moreover, a sequence
\begin{equation}\notag
    \begin{tikzcd}
		0 \arrow[r] & Y \arrow[r, "i"] & X \arrow[r, "q"] & Z \arrow[r] & 0
	\end{tikzcd} 
    \end{equation}
    in a quasi-abelian category $\mathcal{C}$ is called a \emph{strict short exact sequence} if, and only if, the kernel of each morphism coincides with the range of the previous one, $i$ is a strict monomorphism and $q$ is a strict epimorphism. In the category $\boldsymbol{\pBan}$, for $0 < p \le 1$, the quasi-abelian structure manifests as follows:

\begin{prop}\label{prop:pban_quasi_abelian}
Let $T: X \to Y$ be a bounded linear operator in $\boldsymbol{\pBan}$.
\begin{itemize}
    \item[\emph{i)}] The kernel of $T$ is the inclusion of the closed subspace $\ker(T) = T^{-1}(0) \hookrightarrow X$, while the cokernel of $T$ is the canonical projection $Y \to Y / \overline{T(X)}$.
    \item[\emph{ii)}] The map $T$ is a \emph{strict monomorphism} if, and only if, $T$ is an embedding with closed image.
    \item[\emph{iii)}] The map $T$ is a \emph{strict epimorphism} if, and only if, $T$ is surjective. Recall that, due to the Open Mapping Theorem, surjectivity implies that $T$ is an open map.
\end{itemize}
\end{prop}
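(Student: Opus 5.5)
We prove the three items in order, using only the universal properties of kernels and cokernels in $\pBan$ together with the Open Mapping Theorem for $p$-Banach spaces (which is available since these are complete metrizable topological vector spaces, i.e.\ $F$-spaces).

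For i), let $K=T^{-1}(0)$. It is closed because $T$ is continuous, so with the restricted $p$-norm it is a $p$-Banach space. If $S:W\to X$ satisfies $TS=0$, then $S(W)\subset K$, and $S$ factors uniquely through the inclusion as the corestriction, which is continuous because $K$ carries the subspace $p$-norm. For the cokernel, note that $\overline{T(X)}$ is a closed subspace of $Y$, so $Y/\overline{T(X)}$ with the quotient $p$-norm is a $p$-Banach space. Now let $R:Y\to V$ satisfy $RT=0$. Then $R$ vanishes on $T(X)$, hence, by continuity, on $\overline{T(X)}$. Therefore $R$ factors uniquely through the quotient map, and the induced map has the same norm as $R$. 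The closure is essential here: the quotient by a non-closed subspace is not Hausdorff.

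With i) in hand we can compute the canonical map. We have $\mathrm{Coim}(T)=X/\ker T$ and $\mathrm{Im}(T)=\ker\big(Y\to Y/\overline{T(X)}\big)=\overline{T(X)}$. The canonical map $\overline{T}:X/\ker T\to\overline{T(X)}$ is the injective operator induced by $T$.

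For ii), suppose $T$ is a strict monomorphism. Then $\ker T=0$ and $\overline T:X\to\overline{T(X)}$ is an isomorphism of $p$-Banach spaces. This forces $T(X)=\overline{T(X)}$ to be closed and $T$ to be an isomorphism onto its image, i.e.\ an embedding. Conversely, suppose $T$ is an embedding with closed range. Then $\ker T=0$, and $\overline T$ is a continuous bijection $X\to T(X)=\overline{T(X)}$ whose inverse is continuous, so $\overline T$ is an isomorphism. (By the Open Mapping Theorem, injectivity plus closed range would in fact already suffice.)

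For iii), suppose $T$ is surjective. Then $\mathrm{coker}\,T=0$. The map $\overline T:X/\ker T\to Y$ is a continuous bijection between $p$-Banach spaces, so it is an isomorphism by the Open Mapping Theorem. Conversely, if $T$ is a strict epimorphism, then $\mathrm{coker}\,T=0$ gives $\overline{T(X)}=Y$, and $\overline T$ is an isomorphism onto $Y$, so $T(X)=\overline T(X/\ker T)=Y$. Openness of $T$ follows from the Open Mapping Theorem, or directly from $T=\overline T\circ\pi$ with the quotient map $\pi$ open.

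The main delicate point is the cokernel description in i) and the reliance on the Open Mapping Theorem in the non-locally convex setting. Both rest only on completeness and metrizability, so the standard Baire category proof applies verbatim.
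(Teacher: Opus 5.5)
Your proof is correct and follows essentially the same route as the paper: you identify kernel and cokernel via their universal properties, compute $\mathrm{Coim}(T)=X/\ker T$ and $\mathrm{Im}(T)=\overline{T(X)}$, and use the Open Mapping Theorem for the surjective case. The only cosmetic difference is in the converse of ii). There you get continuity of the inverse directly from the definition of an embedding, whereas the paper invokes the Open Mapping Theorem, which you also note would suffice.
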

\begin{proof} Let us check \emph{i)}. The space $\ker(T) = T^{-1}(0)$ is a closed subspace of $X$ because $T$ is continuous and $\{0\}$ is closed. Thus, $T^{-1}(0)$ is itself a $p$-Banach space. Let $i: T^{-1}(0) \hookrightarrow X$ denote the canonical inclusion. By construction, $T \circ i = 0$. If $S: Z \to X$ is any bounded linear operator satisfying $T \circ S = 0$, then $S(Z) \subseteq T^{-1}(0)$, which implies that $S$ factors uniquely through $i$ via a bounded operator $\widetilde{S}: Z \to T^{-1}(0)$ such that $S = i \circ \widetilde{S}$. Hence, $i = \ker(T)$. On the other hand, for the cokernel of $T$, let $N = \overline{T(X)}$. Since $N$ is a closed subspace, the quotient space $Y/N$ (equipped with the quotient $p$-norm) is complete and the canonical projection $\pi: Y \to Y/N$ is a bounded operator with $\pi \circ T = 0$. Suppose $R: Y \to Z$ is a bounded linear operator such that $R \circ T = 0$. Then $T(X) \subseteq \ker(R)$. Because $\ker(R)$ is closed in $Y$, it follows that $N = \overline{T(X)} \subseteq \ker(R)$. By the universal property of quotients of topological vector spaces, $R$ factors uniquely through $\pi$ via a bounded linear operator $\overline{R}: Y/N \to Z$ such that $R = \overline{R} \circ \pi$. Consequently, $\pi = \mathrm{coker}(T)$.
    \par For \emph{ii)}, suppose $T$ is a strict monomorphism. Then $\overline{T}: X / \ker(T) \to \overline{T(X)}$ is an isomorphism in $\boldsymbol{\pBan}$, so that, $\ker(T) = \{0\}$. Therefore, $T$ is injective, and $X / \ker(T) \cong X$. Furthermore, since $\overline{T}$ is surjective onto $\overline{T(X)}$, we have $T(X) = \overline{T(X)}$, which proves that $T(X)$ is closed in $Y$. Because $\overline{T}: X \to T(X)$ is a topological isomorphism, $T$ is a topological embedding onto its closed image. Conversely, assume $T$ is an embedding with closed range $T(X) = \overline{T(X)}$. Therefore, $\ker(T) = \{0\}$, so $\mathrm{Coim}(T) = X$. The fact that $T(X)$ was closed implies that $\mathrm{Im}(T) = T(X)$. The canonical map $\overline{T}: X \to T(X)$ is a continuous linear bijection between complete $p$-Banach or quasi-Banach spaces. Hence, $\overline{T}^{-1}: T(X) \to X$ is bounded. Thus $\overline{T}$ is an isomorphism in $\boldsymbol{\pBan}$, proving that $T$ is a strict monomorphism.

\par Finally, we turn to \emph{iii)}. If $T$ is a strict epimorphism, the isomorphism $\overline{T}: X/\ker(T) \to Y$ is surjective, which implies that $T = \overline{T} \pi$ is surjective. Conversely, suppose $T: X \to Y$ is surjective. Then $T(X) = Y$, so $\mathrm{Im}(T) = \overline{T(X)} = Y$. The induced operator $\overline{T}: X/\ker(T) \to Y$ is a continuous linear bijection. By the Open Mapping Theorem, $\overline{T}$ is an open map, and its inverse $\overline{T}^{-1}: Y \to X/\ker(T)$ is bounded. Hence, $\overline{T}$ is an isomorphism in $\boldsymbol{\pBan}$, establishing that $T$ is a strict epimorphism.
\end{proof}


In view of Proposition \ref{prop:pban_quasi_abelian}, strict short exact sequences admit a concrete functional-analytic characterization:

\begin{defi}
	A sequence
	\[
	\begin{tikzcd}
		0 \arrow[r] & Y \arrow[r, "i"] & X \arrow[r, "q"] & Z \arrow[r] & 0
	\end{tikzcd}
	\]
	in $\boldsymbol{\pBan}$ is a \emph{strict short exact sequence} in the quasi-abelian sense --or an exact sequence in the sense of Quillen \cite{quillen1973}-- provided that 
    \begin{enumerate}
        \item the kernel of each morphism coincides with the range of the previous one, 
        \item $i$ is a topological embedding with closed image,
        \item $q$ is surjective and induces an isomorphism $X / i(Y) \cong Z$.
    \end{enumerate}  
\end{defi}
Observe, nonetheless, that Proposition \ref{prop:pban_quasi_abelian} asserts that items (2) and (3) in the previous definition are a consequence of (1). In any case, these sequences endow $\boldsymbol{\pBan}$ with their canonical exact category structures.

\subsection{Yoneda extensions}\label{sec:pre_2.3}

For $n \ge 1$, a \emph{Yoneda $n$-extension} (or a \emph{strict Yoneda exact sequence}) of length $n$ between $p$-Banach spaces is a sequence $\mathfrak{X}$ in $\boldsymbol{\pBan}$ of the form
\begin{equation} \label{next}
\mathfrak{X}: \quad
\begin{tikzcd}
	0 \arrow[r] & Y \arrow[r] & X_n \arrow[r] & X_{n-1} \arrow[r] & \cdots \arrow[r] & X_1 \arrow[r] & Z \arrow[r] & 0
\end{tikzcd}
\end{equation}
that splices into $n$ strict short exact sequences. Concretely, this means that for each $k = 1, \dots, n-1$, the intermediate image space $K_k = \ker(X_k \to X_{k-1})$ is a closed subspace (hence, a $p$-Banach space), and each induced sequence 
\[
0 \longrightarrow K_k \longrightarrow X_k \longrightarrow K_{k-1} \longrightarrow 0
\]
is a strict short exact sequence in $\boldsymbol{\pBan}$, with $K_0 = Z$ and $K_n = Y$. For $n=0$, a \emph{Yoneda $0$-extension} is defined simply as a bounded linear operator $T: Z \to Y$.
 
\par Let $n \ge 0$. A \emph{morphism} $\varphi: \mathfrak{X} \to \mathfrak{X}'$ from a Yoneda $n$-extension $\mathfrak{X}$ (from $Z$ to $Y$) to a Yoneda $n$-extension $\mathfrak{X}'$ (from $Z'$ to $Y'$) is defined as follows:
\begin{itemize}
    \item For $n \ge 1$, if $\mathfrak{X}$ and $\mathfrak{X}'$ are given by
    \begin{equation}\notag
\mathfrak{X}: \quad
\begin{tikzcd}
	0 \arrow[r] & Y \arrow[r] & X_n \arrow[r] & X_{n-1} \arrow[r] & \cdots \arrow[r] & X_1 \arrow[r] & Z \arrow[r] & 0
\end{tikzcd}
\end{equation}
\begin{equation}\notag
\mathfrak{X'}: \quad
\begin{tikzcd}
    0 \arrow[r] & Y' \arrow[r] & X'_n \arrow[r] & X'_{n-1} \arrow[r] & \cdots \arrow[r] & X'_1 \arrow[r] & Z' \arrow[r] & 0
\end{tikzcd}
\end{equation}
    a morphism $\varphi: \mathfrak{X} \to \mathfrak{X}'$ is a tuple of operators $(\varphi_{-}, \varphi_n, \dots, \varphi_1, \varphi_+)$ such that $\varphi_-: Y \to Y'$, $\varphi_k: X_k \to X'_k$ for $1 \le k \leq n$, and $\varphi_+: Z \to Z'$, making the following diagram commutative:
    \begin{equation}
		\notag
		\begin{tikzcd}
			0 \arrow[r] & Y \arrow[r] \arrow[d, "\varphi_-"] & X_n \arrow[r] \arrow[d, "\varphi_n"] & X_{n-1} \arrow[r] \arrow[d, "\varphi_{n-1}"] & \cdots \arrow[r] \arrow[d] & X_1 \arrow[r] \arrow[d, "\varphi_1"] & Z \arrow[r] \arrow[d, "\varphi_+"] & 0 \\
			0 \arrow[r] & Y' \arrow[r]                       & X'_n \arrow[r]                        & X'_{n-1} \arrow[r]                            & \cdots \arrow[r]           & X'_1 \arrow[r]                        & Z' \arrow[r]                       & 0
		\end{tikzcd}
	\end{equation}
    \item For $n = 0$, if $\mathfrak{X}$ is $T: Z \to Y$ and $\mathfrak{X}'$ is $T': Z' \to Y'$, a morphism $\varphi: \mathfrak{X} \to \mathfrak{X}'$ is a pair of continuous linear operators $(\varphi_-, \varphi_+)$ with $\varphi_-: Y \to Y'$ and $\varphi_+: Z \to Z'$ such that $T' \varphi_+=\varphi_-T$, i.e., the square
    \begin{equation} \notag
    \begin{tikzcd}
        Z \arrow[r, "T"] \arrow[d, "\varphi_+"'] & Y \arrow[d, "\varphi_-"] \\
        Z' \arrow[r, "T'"'] & Y'
    \end{tikzcd}
    \end{equation}
    commutes.
\end{itemize}
\par Moreover, for $n \geq 0$, we will say that the morphism $\varphi: \mathfrak{X} \to \mathfrak{X}'$ between Yoneda $n$-extensions has \emph{fixed ends} if:
	\begin{itemize}
		\item[(i)] $Y'=Y$ and $Z'=Z$.
		\item[(ii)] $\varphi_-=\Id_Y$ and $\varphi_+=\Id_Z$.
	\end{itemize} 
	Consider the following equivalence relation in the class of Yoneda $n$-extensions with fixed ends $Y$ and $Z$:
	\begin{equation}
		\notag
		\begin{split}
			\mathfrak{X} \sim \mathfrak{X}' \iff & \text{ there exists a sequence of } n\text{-extensions } \mathfrak{X}=\mathfrak{X}_0, \mathfrak{X}_1, \ldots, \mathfrak{X}_k=\mathfrak{X}'\\
			& \text{so that for any }  0 \leq i \leq k-1 \text{ there is either }\\
			& \text{a morphism } \mathfrak{X}_i \to \mathfrak{X}_{i+1} \text{ with } \text{fixed ends, or }\\
			& \text{a morphism } \mathfrak{X}_{i+1} \to \mathfrak{X}_{i} \text{ with fixed ends.}
		\end{split}
	\end{equation}
	It can be shown (see \cite[Corollary 6.4]{exact_categ}) that we only need two Yoneda $n$-extensions and three morphisms to establish that $\mathfrak{X} \sim \mathfrak{X}'$. We will denote $\Ext^n_{\boldsymbol{\pBan}}(Z,Y)$ to the set of equivalence classes of Yoneda $n$-extensions between $Y$ and $Z$ in $\boldsymbol{\pBan}$. We will write $\left[ \mathfrak{X} \right]$ to indicate the equivalence class of the Yoneda $n$-extension $\mathfrak{X}$ --see Equation \ref{next}-- in $\Ext^n_{\boldsymbol{\pBan}}(Z,Y)$.
	Let us observe that, for $n=0$, the general equivalence relation $\sim$ reduces to equality between operators. Indeed, if $\mathfrak{X}$ and $\mathfrak{X}'$ are two $0$-extensions, then they are given by two continuous linear operators $T:Z \to Y$ and $T':Z' \to Y'$, a morphism with fixed ends $\varphi: \mathfrak{X} \to \mathfrak{X}'$ exists if, and only if, $Y=Y'$ and $Z=Z'$, and
\[
T' \Id_Z = \Id_Y T \quad \iff \quad T' = T.
\]
Consequently, $\mathfrak{X} \sim \mathfrak{X}$ if, and only if, $ T = T'$, which establishes a canonical identification:
\[
\Ext^0_{\boldsymbol{\pBan}}(Z,Y) = \mathcal{L}_{\pBan}(Z,Y).
\]
Under this equalities, the equivalence class $[T]$ of a $0$-extension is simply the operator $T$ itself. On the other hand, for $n=1$, we obtain Yoneda $1$-extensions (more simply, \emph{Yoneda extensions}), which are precisely \emph{strict short exact sequences}
\[
	\begin{tikzcd}
		\mathfrak{X}: & 0 \arrow[r] & Y \arrow[r] & X \arrow[r] & Z \arrow[r] & 0,
	\end{tikzcd}
\]
and the equivalence relation becomes considerably simpler: two strict short exact sequences
\begin{tikzcd}[cramped, column sep=1.5em]
	\mathfrak{X}_i: 0 \arrow[r] & Y \arrow[r] & X_i \arrow[r] & Z \arrow[r] & 0
\end{tikzcd}
(for $i=1,2$) are said to be equivalent if there exists a bounded linear operator $T: X_1 \to X_2$ making the following diagram commutative:
\begin{equation}\notag
	\begin{tikzcd}
		\mathfrak{X}_1: & 0 \arrow[r] & Y \arrow[r] \arrow[d, equal] & X_1 \arrow[r] \arrow[d, "T"] & Z \arrow[r] \arrow[d, equal] & 0 \\
		\mathfrak{X}_2: & 0 \arrow[r] & Y \arrow[r] & X_2 \arrow[r] & Z \arrow[r] & 0
	\end{tikzcd}
\end{equation}
By the Short Five Lemma in quasi-abelian categories, any such operator $T$ is automatically an isomorphism in $\boldsymbol{\pBan}$.

	\subsection{Pull-back and push-out} \label{sec:pre_2.4}
    For $n \geq 1$, given a Yoneda $n$-extension in $\boldsymbol{\pBan}$
	\begin{equation}
		\notag
		\begin{tikzcd}
			0 \arrow[r] & Y \arrow[r, "j"] & X_n \arrow[r] & X_{n-1} \arrow[r] & \cdots \arrow[r] & X_1 \arrow[r, "q"] & Z \arrow[r] & 0
		\end{tikzcd}
	\end{equation}
	and an operator $T: Z' \to Z$, the \emph{pull-back Yoneda $n$-extension} induced by the couple $\lbrace q, T \rbrace$ is the lower row of the diagram 
    \begin{equation}
	\notag 
	\begin{tikzcd}
		\mathfrak{X}: & 0 \arrow[r] & Y \arrow[r, "j"] \arrow[d, equal] & X_n \arrow[r] \arrow[d, equal] & X_{n-1} \arrow[r] \arrow[d, equal] & \cdots \arrow[r] & X_1 \arrow[r, "q"] & Z \arrow[r] & 0 \\
		\mathfrak{X}T: & 0 \arrow[r] & Y \arrow[r, "j"] & X_n \arrow[r] & X_{n-1} \arrow[r] & \cdots \arrow[r] & PB \arrow[r, "\pi_2"] \arrow[u, "\pi_1"] & Z' \arrow[r] \arrow[u, "T"] & 0
	\end{tikzcd}
\end{equation}
	where the \emph{pull-back space} is defined as the space 
	$$PB=\lbrace (x,z') \in X_1 \times Z':q(x)=T(z') \rbrace$$
	The arrows $\pi_1:PB \to X_1$ and $\pi_2:PB \to Z'$ are the restrictions to $PB$ of the canonical projections. Due to the splicing process for Yoneda extensions (see next section), it can be proved that if the operator $T:Z' \to Z$ can be lifted to $X_1$, that is to say, there exists  $\hat{T}:Z' \to X_1$ such that $q \hat{T}=T$, then the pull-back Yoneda $n$-extension is trivial. The converse only holds for the case $n=1$. 
    \par For $n = 0$, given a Yoneda $0$-extension $\mathfrak{X}$ from $Z$ to $Y$ represented by an operator $S: Z \to Y$ and an operator $T: Z' \to Z$, the \emph{pull-back Yoneda $0$-extension} $\mathfrak{X}T$ is defined as the pre-composition operator $\mathfrak{X}T := ST: Z' \to Y$,
which is a Yoneda $0$-extension from $Z'$ to $Y$. 
	\par On the other hand, given a Yoneda $n$-extension
	\begin{equation}
		\notag
		\begin{tikzcd}
			0 \arrow[r] & Y \arrow[r, "j"] & X_n \arrow[r] & X_{n-1} \arrow[r] & \cdots \arrow[r] & X_1 \arrow[r, "q"] & Z \arrow[r] & 0
		\end{tikzcd}
	\end{equation} 
	in  $\boldsymbol{\pBan}$ and an operator $T: Y \to Y'$ the \emph{push-out Yoneda $n$-extension} induced by the couple $\lbrace j, T \rbrace$ is the lower row of the diagram 
	\begin{equation}
	\notag
	\begin{tikzcd}
		\mathfrak{X}: & 0 \arrow[r] & Y \arrow[r, "j"] \arrow[d, "T"] & X_n \arrow[r] \arrow[d, "i_2"] & X_{n-1} \arrow[r] \arrow[d, equal] & \cdots \arrow[r] & X_1 \arrow[r, "q"] & Z \arrow[r] & 0 \\
		T\mathfrak{X}: & 0 \arrow[r] & Y' \arrow[r, "i_1"] & PO \arrow[r] & X_{n-1} \arrow[r] & \cdots \arrow[r] & X_1 \arrow[r, "q"] \arrow[u, equal] & Z \arrow[r] \arrow[u, equal] & 0
	\end{tikzcd}
\end{equation}
	where the push-out space is defined as the space
	$$PO=\frac{Y' \oplus_p X_n}{\Delta} \quad, \quad \Delta=\lbrace (T(y), -j(y)) \in Y' \oplus_p X_n: y \in Y \rbrace$$
	The arrows $i_1: Y' \to PO$ and $i_2:X_n \to PO$ are the compositions of the natural injections into $Y' \oplus_p X_n$ with the quotient map. 
	Again, it can be proved that if the operator $T:Y \to Y'$ can be extended to $X_n$, that is to say, there exists  $\hat{T}:X_n \to Y$ such that $\hat{T}j=T$, then the push-out Yoneda $n$-extension is trivial. The converse only holds for the case $n=1$. 
    \par For $n = 0$, given a Yoneda $0$-extension $\mathfrak{X}$ from $Z$ to $Y$ represented by an operator $S: Z \to Y$ and an operator $T: Y \to Y'$, the \emph{push-out Yoneda $0$-extension} $T\mathfrak{X}$ is defined as the post-composition operator $T\mathfrak{X} := TS: Z \to Y'$,
which is a Yoneda $0$-extension from $Z$ to $Y'$.
	\par Moreover, it can be easily proved that if $\mathfrak{X}$ and $\mathfrak{X}'$ are two equivalent Yoneda $n$-extensions, then for any operator $T:E \to Z$ the \textsl{pull-back} sequences $\mathfrak{X}T$ and $\mathfrak{X}'T$ are equivalent, and for any operator $G:Y \to E$ the  \textsl{push-out} sequences $G\mathfrak{X}$ and $G\mathfrak{X}'$ are equivalent. 
	
	\subsection*{Linear structure in the class of all Yoneda $n$-extensions}
    For any positive integer $n$, the sets $\Ext^n_{\boldsymbol{\pBan}}(Z,Y)$ admit a natural linear structure whose operations are defined using pull-backs and push-outs. Indeed:
    \begin{itemize}
        \item[i)] \textbf{Sum of $n$-extensions.} Given $[\mathfrak{X}], \mathfrak{[X']} \in \Ext^n_{\pBan}(Z,Y)$ the \emph{Baer sum} $[\mathfrak{X}+\mathfrak{X'}] \in \Ext^n_{\pBan}(Z,Y)$ is constructed taking the $n$-extension $[\mathfrak{X}] \oplus_p [\mathfrak{X}']$
        \begin{equation}
            \notag
            \begin{tikzcd}
0 \arrow[r] & Y \oplus_pY \arrow[r] & X_1 \oplus_p X'_1 \arrow[r] & \cdots \arrow[r] & X_n \oplus_p X'_n \arrow[r] & Z\oplus_pZ \arrow[r] & 0
\end{tikzcd}
        \end{equation}
        and making pull-back with the \emph{diagonal} operator $\Delta: Z \to Z \oplus_p Z$, $\Delta(z) = (z,z)$ and push-out with the \emph{sum} operator $S: Y \oplus_p Y \to Y$, $S(y,y') = y+y'$:
        \begin{equation}
            \label{Baersum}
            \begin{tikzcd}
0 \arrow[r] & Y \oplus_pY \arrow[r]                                                & X_1 \oplus_p X'_1 \arrow[r]                                          & \cdots \arrow[r] & X_n \oplus_p X'_n \arrow[r]                                   & Z\oplus_pZ' \arrow[r]                                           & 0 \\
0 \arrow[r] & Y \oplus_pY \arrow[r] \arrow[u, equal] \arrow[d, "S"'] & X_1 \oplus_p X'_1 \arrow[r] \arrow[u, equal] \arrow[d] & \cdots \arrow[r] & PB(\Delta) \arrow[r] \arrow[u] \arrow[d, equal] & Z \arrow[u, "\Delta"'] \arrow[r] \arrow[d, equal] & 0 \\
0 \arrow[r] & Y \arrow[r]                                                          & PO(S) \arrow[r]                                                      & \cdots \arrow[r] & PB(\Delta) \arrow[r]                                          & Z \arrow[r]                                                     & 0
\end{tikzcd}
        \end{equation}
        The lower Yoneda $n$-extension of Diagram (\ref{Baersum}) is called the Baer sum of $[\mathfrak{X}]$ and $[\mathfrak{X}']$.
        \item[ii)] \textbf{Scalar product.} For $\lambda \in \mathbb R$ and $[\mathfrak{X}] \in \Ext^n_{\pBan}(Z,Y)$, let us make pull-back with the operator $\lambda\Id:Z \to Z$:
        \begin{equation}
            \label{scalarproduct}
            \begin{tikzcd}
0 \arrow[r] & Y \arrow[r] \arrow[d, equal] & X_1 \arrow[r] \arrow[d, equal] & \cdots \arrow[r] & X_n \arrow[r]                       & Z \arrow[r]                          & 0 \\
0 \arrow[r] & Y \arrow[r]                                & X_1 \arrow[r]                                & \cdots \arrow[r] & PB(\lambda \Id) \arrow[u] \arrow[r] & Z \arrow[u, "\lambda\Id"'] \arrow[r] & 0
\end{tikzcd}
        \end{equation}
        The lower Yoneda $n$-extension of Diagram (\ref{scalarproduct}) is the result of multiplying $[\mathfrak{X}]$ by $\lambda$.
    \end{itemize}

The zero object in $\Ext^n_{\pBan}(Z,Y)$ is the equivalence class of the so-called \emph{trivial} $n$-exact sequence, which is, for $n=1$, the direct sum sequence 
	\[ \begin{tikzcd}
		0 \arrow[r] & Y \arrow[r, "i"] & Y \oplus_p Z \arrow[r, "q"] & Z \arrow[r] & 0
	\end{tikzcd}\]
	and for $n>1$ is 
	\begin{equation}
		\notag
		\begin{tikzcd}
			0 \arrow[r] & Y \arrow[r, equal] & Y \arrow[r, "0"] & \cdots \arrow[r, "0"] & Z \arrow[r, equal] & Z \arrow[r] & 0.
		\end{tikzcd}
	\end{equation}
    \par For $n = 0$, the set $\Ext^0_{\boldsymbol{\pBan}}(Z,Y) = \mathcal{L}_{\pBan}(Z,Y)$ naturally inherits the vector space structure of continuous linear operators. Under the identification of a $0$-extension as an operator $T \in \mathcal{L}_{\pBan}(Z,Y)$, the general operations via pull-backs and push-outs reduce directly to the standard operator addition and scalar multiplication. Moreover, the zero object in $\Ext^0_{\boldsymbol{\pBan}}(Z,Y)$ is the zero operator $0: Z \to Y$.
    \par Recall that all these operations we have already introduced in the category of $p$-Banach spaces can be defined for $n$-extensions of quasi-Banach spaces in the natural way. Therefore, for any $n\geq 0$, the sets $\Ext^n_{\QBan}(Z,Y)$ are also vector spaces.
	\subsection*{Splicing and cutting Yoneda $n$-extensions}
	Two elements $\mathfrak{F} \in \Ext^n_{\boldsymbol{\pBan}}(E,Y)$ and $\mathfrak{X} \in \Ext^m_{\boldsymbol{\pBan}}(Z,E)$ can be spliced through $E$ to get the Yoneda $(n+m)$-extension
	\begin{equation}\notag
		\begin{tikzcd}[column sep=0.6 cm]
			\mathfrak{FX}: 0 \arrow[r] & Y \arrow[r] & F_1 \arrow[r] & \cdots \arrow[r] & F_n \arrow[rd] \arrow[rr] &              & X_1 \arrow[r] & \cdots \arrow[r] & X_n \arrow[r] & Z \arrow[r] & 0 \\
			&             &               &                  &                           & E \arrow[ru] &               &                  &               &             &  
		\end{tikzcd}
	\end{equation}
	denoted by $\mathfrak{F} \mathfrak{X}$. Using the definition of triviality of Yoneda $n$-extension, it is easy to check that if $\mathfrak{F} \sim 0$ or $\mathfrak{X} \sim 0$ then $\mathfrak{F} \mathfrak{X} \sim 0$. Conversely, if $n \geq 2$, then every Yoneda $n$-extension
	\begin{equation}
	\notag
	\begin{tikzcd}
		\mathfrak{X}: & 0 \arrow[r] & Y \arrow[r, "f_0"] & X_1 \arrow[r, "f_1"] & \cdots \arrow[r, "f_{n-1}"] & X_n \arrow[r, "f_n"] & Z \arrow[r] & 0
	\end{tikzcd}
\end{equation}
	can be broken into shorter Yoneda extensions in the following way. Choose $i$ such that $1 < i \leq n$. Since $\mathfrak{X}$ is exact at $X_i$, then $\ker f_i=\Im f_{i-1}:=W_{i-1}$. Hence, we obtain a Yoneda $(n-i)$-extension $\mathfrak{L}$ and a Yoneda $(i-1)$-extension $\mathfrak{R}$ so that $\mathfrak{X}=\mathfrak{L} \mathfrak{R}$:
	\begin{equation}\notag
	\begin{tikzcd}
		\mathfrak{L}: & 0 \arrow[r] & Y \arrow[r, "f_0"] & X_1 \arrow[r, "f_1"] & \cdots \arrow[r, "f_{i-2}"] & X_{i-1} \arrow[r, "f_{i-1}"] & W_{i-1} \arrow[r] & 0 \\
		\mathfrak{R}: & 0 \arrow[r] & W_{i-1} \arrow[r, "i", hook] & X_i \arrow[r, "f_i"] & \cdots \arrow[r, "f_{n-1}"] & X_n \arrow[r, "f_n"] & Z \arrow[r] & 0
	\end{tikzcd}
\end{equation}
	Repeating the argument, it is easy to see that every Yoneda $n$-extension is the \emph{composition} of $n$ strict hort exact sequences, meaning we can construct the following diagram  
	\begin{equation}
	\notag
	\begin{tikzcd}[column sep=0.5cm]
		\mathfrak{X}: & 0 \arrow[r] & Y \arrow[r, "f_0"] & X_1 \arrow[rr, "f_1"] \arrow[rd, "q_1"] & & X_2 \arrow[r, "f_2"] & \cdots \arrow[r, "f_{n-2}"] & X_{n-1} \arrow[rr, "f_{n-1}"] \arrow[rd, "q_{n-1}"] & & X_n \arrow[r, "f_n"] & Z \arrow[r] & 0 \\
		& & & & W_1 \arrow[ru, "j_1"] & & & & W_{n-1} \arrow[ru, "j_{n-1}"] & & &
	\end{tikzcd}
\end{equation}
	where $W_i=\Im f_i=\ker f_{i+1}$ for $1\leq i < n$, and
	\begin{equation}
	\notag
	\begin{tikzcd}
		\mathfrak{X}_1: & 0 \arrow[r] & Y \arrow[r, "f_0"] & X_1 \arrow[r, "q_1"] & W_1 \arrow[r] & 0 \\
		\mathfrak{X}_{i+1}: & 0 \arrow[r] & W_i \arrow[r, "j_i"] & X_{i+1} \arrow[r, "q_{i+1}"] & W_{i+1} \arrow[r] & 0 \\
		\mathfrak{X}_n: & 0 \arrow[r] & W_{n-1} \arrow[r, "j_{n-1}"] & X_n \arrow[r, "f_n"] & Z \arrow[r] & 0
	\end{tikzcd}
\end{equation}
	are strict short exact sequences such that $\mathfrak{X}=\mathfrak{X}_1 \mathfrak{X}_2 \cdots \mathfrak{X}_{n-1} \mathfrak{X}_n$.
    \par In the case that $n=0$ or $m=0$, the splicing operation coincides with the functorial actions of push-out and pull-back, or standard operator composition:
\begin{itemize}
    \item[i)] If $n \ge 1$ and $m = 0$, given $\mathfrak{F} \in \Ext^n_{\boldsymbol{\pBan}}(E,Y)$ and a $0$-extension $\mathfrak{X}=T \in \mathcal{L}(Z,E) = \Ext^0_{\boldsymbol{\pBan}}(Z,E)$, the spliced extension $\mathfrak{F}\mathfrak{X}$ is precisely the pull-back $n$-extension $\mathfrak{F}\mathfrak{X} = \mathfrak{F}T \in \Ext^n_{\boldsymbol{\pBan}}(Z,Y)$.
    \item[ii)] If $n = 0$ and $m \ge 1$, given a $0$-extension $\mathfrak{F} = S \in \mathcal{L}(E,Y) =\Ext^0_{\boldsymbol{\pBan}}(E,Y)$ and $\mathfrak{X} \in \Ext^m_{\boldsymbol{\pBan}}(Z,E)$, the spliced extension $\mathfrak{F}\mathfrak{X}$ is precisely the push-out $m$-extension $\mathfrak{F}\mathfrak{X}=S\mathfrak{X} \in \Ext^m_{\boldsymbol{\pBan}}(Z,Y)$.
    \item[iii)] If $n = 0$ and $m = 0$, given $0$-extensions $\mathfrak{F} = S \in \mathcal{L}(E,Y)$ and $\mathfrak{X} = T \in \mathcal{L}(Z,E)$, the spliced extension $\mathfrak{F}\mathfrak{X}$ reduces to the usual operator composition $\mathfrak{F}\mathfrak{X} = S T \in \mathcal{L}(Z,Y)= \Ext^0_{\boldsymbol{\pBan}}(Z,Y).$
\end{itemize}
The splicing and cutting of Yoneda $n$-extensions of quasi-Banach spaces is completely analogous.
\subsection{Functors} \label{sec:pre_2.5}
We will work mainly with two functors: the operator functor $\mathcal{L}(E,-)$ and the $\operatorname{Ext}^n(E,-)$-functors associated with Yoneda $n$-extensions, for any $n \ge 0$. Given a $p$-Banach space $E$, the covariant functor $\mathcal{L}_{\boldsymbol{\pBan}}(E,-): \boldsymbol{\pBan} \rightsquigarrow \mathbf{V}$ assigns to each $p$-Banach space $X$ the vector space $\mathcal{L}_{\boldsymbol{\pBan}}(E,X)$ of all continuous linear operators from $E$ to $X$, and to each operator $T: X \to Y$ the linear map $T_*: \mathcal{L}(E,X) \to \mathcal{L}(E,Y)$ given by $T_*(S) = T \circ S$. For a quasi-Banach space $E$, the functor $\mathcal{L}_{\boldsymbol{\QBan}}(E,-): \boldsymbol{\QBan} \rightsquigarrow \mathbf{V}$ is defined analogously.

\par On the other hand, for $n \ge 0$, $\operatorname{Ext}^n_{\boldsymbol{\pBan}}(E,-): \boldsymbol{\pBan} \rightsquigarrow \mathbf{V}$ is a functor which assigns to any $p$-Banach space $X$ the vector space $\operatorname{Ext}^n_{\boldsymbol{\pBan}}(E,X)$ formed by all Yoneda $n$-extensions of $E$ by $X$ in $\boldsymbol{\pBan}$. For an operator $T: X \to Y$, the induced linear map $\operatorname{Ext}^n_{\boldsymbol{\pBan}}(E,T): \operatorname{Ext}^n_{\boldsymbol{\pBan}}(E,X) \to \operatorname{Ext}^n_{\boldsymbol{\pBan}}(E,Y)$ is defined as follows:
\begin{itemize}
    \item For $n \ge 1$, it acts by taking the push-out along $T$:
    \begin{equation}
	\notag
	\begin{tikzcd}
		\mathfrak{X}: & 0 \arrow[r] & X \arrow[r] \arrow[d, "T"'] & X_n \arrow[r] \arrow[d, "i_n"] & X_{n-1} \arrow[r] \arrow[d, equal] & \cdots \arrow[r] & X_1 \arrow[r] \arrow[d, equal] & E \arrow[r] \arrow[d, equal] & 0 \\
		T\mathfrak{X}: & 0 \arrow[r] & Y \arrow[r, "i"] & PO \arrow[r] & X_{n-1} \arrow[r] & \cdots \arrow[r] & X_1 \arrow[r] & E \arrow[r] & 0
	\end{tikzcd}
\end{equation}
    \item For $n = 0$, it acts by composition: $\operatorname{Ext}^0_{\boldsymbol{\pBan}}(E,T)(S) = TS = T_*(S)$.
\end{itemize}

\par Analogously, the functors associated with Yoneda $n$-extensions can be defined in $\boldsymbol{\QBan}$, yielding the $\operatorname{Ext}^n_{\boldsymbol{\QBan}}(E,-)$ functors. Let us observe that, in both categories $\boldsymbol{\pBan}$ and $\boldsymbol{\QBan}$, for $n=0$, the functor $\operatorname{Ext}^0_{\boldsymbol{\pBan}}(E,-)$ coincides canonically with the operator functor $\mathcal{L}_{\boldsymbol{\pBan}}(E,-)$, and $\operatorname{Ext}^0_{\boldsymbol{\QBan}}(E,-)$ coincides with $\mathcal{L}_{\boldsymbol{\QBan}}(E,-)$.

\par Finally, we remark that, although we have focused on the covariant case, the contravariant functors $\mathcal{L}(-,E)$ and $\operatorname{Ext}^n(-,E)$ in both categories $\boldsymbol{\pBan}$ and $\boldsymbol{\QBan}$ are defined analogously. In the case of $\operatorname{Ext}^n_{\pBan}(-,E)$ and $\operatorname{Ext}^n_{\QBan}(-,E)$ the functorial action for $n \ge 1$ is given by pull-backs, and for $n=0$ by pre-composition $S \mapsto ST$.

\section{Existence of relative injective objects in $\pBan$}  \label{main}
\par To develop the classical theory of derived functors in a category $\mathcal{C}$, the concepts of injective and projective objects play a central role. However, strictly speaking, there are no non-zero injective or projective objects in the categories $\boldsymbol{\Ban}$, $\boldsymbol{\pBan}$, and $\boldsymbol{\QBan}$ \cite{pothoven}. Nevertheless, this issue can be partially resolved by appealing to the framework of exact categories (see \cite[Chapter 3]{exact_categ}), which naturally corresponds to the traditional usage of the terms ``injective'' and ``projective'' in Banach space theory.
\par In this context, an object $I$ in $\boldsymbol{\pBan}$ (or $\boldsymbol{\QBan}$) is said to be \emph{injective} if for every strict monomorphism $j: Y \to X$ and every operator $T: Y \to I$, there exists an extension $\hat{T}: X \to I$ of $T$ through $j$, that is, $\hat{T}j = T$. Dually, an object $P$ in $\boldsymbol{\pBan}$ (or $\boldsymbol{\QBan}$) is said to be \emph{projective} if for every strict epimorphism $\pi: X \to Z$ and every operator $T: P \to Z$, there exists a lifting $\hat{T}: P \to X$ of $T$ through $\pi$, that is, $\pi \hat{T} = T$.
\par With the previous definitions, the situation is as follows: $\Ban$ has enough projective objects, which are precisely the $\ell_1(\Gamma)$ spaces, and also has enough injective objects, among which we find the $\ell_{\infty}(\Gamma)$ spaces; remarks that a full characterization of injective Banach spaces remains unknown. On the other hand, for any $0<p<1$, the projective objects in $\pBan$ are the $\ell_p(\Gamma)$ spaces, but $\pBan$ contains no non-zero injective objects. Finally, the most restrictive category in this sense is $\QBan$, in which there are neither non-zero projective nor injective objects \cite[2.9]{hmbst}. This implies, in particular, that the classical construction of right derived functors via injective resolutions is unavailable in $\boldsymbol{\pBan}$. To overcome this limitation, our goal in this section is to construct, for any two fixed $p$-Banach spaces, a space that allows us to establish a \emph{relative} derivation process for covariant, left-exact functors on $\boldsymbol{\pBan}$.
	
	\subsection{The construction of relative injective objects}
	\label{sec:relative-injective}
	Fix $E$ a $p$-Banach space and  $n\in \mathbb N$. Given $X$ a $p$-Banach space, we will need to treat $\Ext_{\pBan}^n(E,X)$ as an index set. In such a case, we write $A=\Ext_{\pBan}^n(E,X)$, and an element $\alpha \in A$ represents the class of the $n$-extension
	\begin{equation}
		\label{eq:next}
		\begin{tikzcd}
			0 \arrow[r] & X \arrow[r, "{j_\alpha}"] & {Z_{1,\alpha}} \arrow[r, "f_1"] & {Z_{2,\alpha}} \arrow[r] & \cdots \arrow[r] & {Z_{n,\alpha}} \arrow[r] & E \arrow[r] & 0
		\end{tikzcd}
	\end{equation}
	where $\norm{j_\alpha}=1$. Recall that the space $Z_{1,\alpha}$ is not well defined up to isomorphism. However, for our purposes below it suffices to choose $Z_{1,\alpha}$ \emph{any} $p$-Banach space which can be inserted in the second place of any $n$-extension which is equivalent to $\alpha$. Before going on, we need to introduce a definition:
\begin{defi}\label{def:lp_sum_infinite}
Fix $0 < p \le 1$. Let $I$ be an arbitrary index set and let $\{X_i\}_{i \in I}$ be a family of $p$-Banach spaces. The \emph{$\ell_p$-sum} of the family $\{X_i\}_{i \in I}$, denoted by $\left(\bigoplus_{i \in I} X_i\right)_{p}$, is defined by
\[
	\left(\bigoplus_{i \in I} X_i\right)_{p} = \left\{ (x_i)_{i \in I} \in \prod_{i \in I} X_i : \sum_{i \in I} \|x_i\|_{X_i}^p < \infty \right\},
\]
equipped with the $p$-norm
\[
	\|(x_i)_{i \in I}\|_p = \left( \sum_{i \in I} \|x_i\|_{X_i}^p \right)^{1/p}.
\]
\end{defi}
	\subsubsection*{Construction} Under the previous hypotheses and the notations of the $n$-extension (\ref{eq:next}), consider the space $\left(\oplus_{\alpha \in A} Z_{1,\alpha}\right)_p$. Given $x \in X$ and two distinct elements
	$\alpha, \alpha'\in A$, we write 
	$x_{\alpha, \alpha'}$ for the element in $\left(\oplus_{\alpha \in A} Z_{1,\alpha}\right)_p$ defined as
	$$x_{\alpha, \alpha'}(\alpha'')= \left\{ \begin{array}{cl}
		j_\alpha(x) &   \text{ if } \alpha''=\alpha, \\
		\\ -j_{\alpha'}(x) &  \text{ if }  \alpha''=\alpha', \\
		\\ 0 &  \text{ otherwise,}
	\end{array}
	\right.
	$$
	and let $X_A$ be the closed span of all $x_{\alpha, \alpha'}$ inside $\left(\oplus_{\alpha \in A} Z_{1,\alpha}\right)_p$. Finally, we define
	$$\mathcal{N}_E^n(X)=\frac{\left(\oplus_{\alpha \in A} Z_{1,\alpha}\right)_p}{X_A}.$$
	
	\begin{thm}\label{0}
		Let $E$ be a fixed $p$-Banach space. For every $p$-Banach space $X$, there exists an into isomorphism $\rho^n_X: X \to \mathcal{N}_E^n(X)$ such that for every element $[\mathfrak{Z}] \in \Ext^n_{\pBan}(E,X)$ it holds that $[\rho_X^n\mathfrak{Z}]=0$.
	\end{thm}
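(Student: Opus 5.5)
The plan is to take $\rho^n_X(x)$ to be the class of $j_\alpha(x)$ placed in the $\alpha$-th coordinate, for some fixed $\alpha$. Triviality of $\rho^n_X\mathfrak Z$ will follow from the extension criterion for push-outs in Section \ref{sec:pre_2.4}. The into-isomorphism property will come from identifying a subspace of $\mathcal N_E^n(X)$ with $X$ through a summation map.

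\textbf{Definition of $\rho^n_X$.} For $\alpha\in A$ and $z\in Z_{1,\alpha}$, write $z e_\alpha$ for the element of $\left(\oplus_{\alpha\in A} Z_{1,\alpha}\right)_p$ that equals $z$ at $\alpha$ and $0$ elsewhere. Let $\pi$ denote the quotient map onto $\mathcal N^n_E(X)$. Fix $\alpha_0\in A$ (for instance the class $0$) and set
\[
\rho^n_X(x)=\pi\bigl(j_{\alpha_0}(x)e_{\alpha_0}\bigr).
\]
Since $x_{\alpha,\alpha'}\in X_A$, we get $\pi(j_\alpha(x)e_\alpha)=\pi(j_{\alpha'}(x)e_{\alpha'})$ for all $\alpha,\alpha'$, so $\rho^n_X$ does not depend on the choice of $\alpha_0$. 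It is clearly linear, and $\|\rho^n_X\|\le\|j_{\alpha_0}\|=1$.

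\textbf{Vanishing of $[\rho^n_X\mathfrak Z]$.} Let $[\mathfrak Z]=\alpha\in A$ and represent it by the chosen extension (\ref{eq:next}). The operator $\hat\rho:Z_{1,\alpha}\to\mathcal N^n_E(X)$, $\hat\rho(z)=\pi(z e_\alpha)$, is bounded with norm at most $1$. By the independence just shown, it satisfies $\hat\rho\, j_\alpha=\rho^n_X$. So $\rho^n_X$ extends through $j_\alpha$, and by the remark in Section \ref{sec:pre_2.4} the push-out $n$-extension $\rho^n_X\mathfrak Z$ is trivial. Equivalence of representatives is respected by push-outs, so $[\rho^n_X\mathfrak Z]=0$.

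\textbf{Into isomorphism.} This is the main point. I would first normalize the representatives so that every $j_\alpha$ is an isometric embedding, not merely norm one. To do this, replace the $p$-norm of $Z_{1,\alpha}$ by an equivalent one, namely the Minkowski functional of the absolutely $p$-convex hull of $j_\alpha(B_X)\cup\delta_\alpha B_{Z_{1,\alpha}}$ for a suitable $\delta_\alpha>0$. This does not change the equivalence class of the extension, and I expect it to make $j_\alpha$ isometric. If it gives only a uniform lower bound $\|j_\alpha x\|\ge c\|x\|$, that suffices as well. Checking that this renorming really works is the step I expect to be the main obstacle.

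Granting the normalization, consider the closed subspace $W=\bigl(\oplus_\alpha j_\alpha(X)\bigr)_p$. It is isometric to $(\oplus_\alpha X)_p$ and contains $X_A$. Define the summation map $\Sigma:(\oplus_\alpha X)_p\to X$ by $(x_\alpha)\mapsto\sum_\alpha x_\alpha$. It is bounded because $\|\sum x_\alpha\|^p\le\sum\|x_\alpha\|^p$, and it is surjective. Its kernel equals the closed span of the elements $x_{\alpha,\alpha'}$: this span is contained in the kernel, and finitely supported kernel elements are finite sums of such elements, which are dense in the kernel. Hence $W/X_A\cong X$, and under this identification $\rho^n_X$ corresponds to the identity.

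Finally, for $w\in W$ we have $X_A\subset W$, so $\operatorname{dist}(w,X_A)$ is the same whether it is computed in $W$ or in the whole sum. Thus $W/X_A$ sits isometrically inside $\mathcal N^n_E(X)$. This gives $\|\rho^n_X(x)\|\ge c\|x\|$, so $\rho^n_X$ is an into isomorphism.
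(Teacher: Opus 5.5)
Your definition of $\rho^n_X$ and your proof that $[\rho^n_X\mathfrak Z]=0$ are the paper's argument. The map $z\mapsto\pi(z e_\alpha)$ is the paper's $\iota_\alpha$, it extends $\rho^n_X$ through $j_\alpha$, and so the first short exact piece of the push-out splits.

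Where you genuinely differ is the into-isomorphism claim. The paper only asserts that $\iota_\alpha$ is a ``natural into isomorphism'', hence so is $\rho^n_X=\iota_\alpha j_\alpha$, under the sole normalization $\|j_\alpha\|=1$. That normalization is not enough. The quotient norm, computed on the dense span of the $x_{\alpha,\alpha'}$, is
\[
\|\rho^n_X(x)\|^p=\inf\Bigl\{\sum_{\alpha}\|j_\alpha(u_\alpha)\|^p:\ (u_\alpha)\ \text{finitely supported},\ \sum_\alpha u_\alpha=x\Bigr\},
\]
so $\|\rho^n_X(x)\|\le\|j_\alpha(x)\|$ for every $\alpha$. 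Suppose the chosen norm-one representatives have lower bounds $\inf_{\|x\|=1}\|j_\alpha(x)\|$ tending to $0$ along infinitely many classes. This can be arranged by a Minkowski-functional renorming of $Z_{1,\alpha}$ like yours, using a distorted ball of $X$ instead of $B_X$. Then $\rho^n_X$ is not bounded below. Your uniform lower bound is therefore a needed ingredient, not a convenience. It is also legitimate, since the paper leaves the representative of each class free. What you prove is the theorem for $\mathcal N^n_E(X)$ built from normalized representatives, which is all the later sections need.

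The step you flagged does go through, provided you take $\delta_\alpha\le 1/C_\alpha$, where $\|x\|\le C_\alpha\|j_\alpha(x)\|$ on $X$. Since $\|j_\alpha\|\le 1$, we have $\delta_\alpha B_{Z_{1,\alpha}}\subseteq U_\alpha\subseteq\max\{1,\delta_\alpha\}B_{Z_{1,\alpha}}$. So the Minkowski functional $|\cdot|_\alpha$ is an equivalent $p$-norm. The identity is then a fixed-ends isomorphism of extensions, so the class is unchanged. Clearly $|j_\alpha(x)|_\alpha\le\|x\|$.

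For the reverse inequality, suppose $j_\alpha(x)\in U_\alpha$. Write $j_\alpha(x)=j_\alpha(y)+\delta_\alpha w$, where:
\begin{itemize}
\item $y=\sum_i\lambda_i x_i$ with $x_i\in B_X$ and $s=\sum_i|\lambda_i|^p$;
\item $w=\sum_k\mu_k z_k$ with $z_k\in B_{Z_{1,\alpha}}$ and $\sum_k|\mu_k|^p\le 1-s$.
\end{itemize}
Then $\delta_\alpha w=j_\alpha(x-y)$, so $\|x-y\|^p\le(C_\alpha\delta_\alpha)^p\|w\|^p\le 1-s$. Hence $\|x\|^p\le\|y\|^p+\|x-y\|^p\le s+(1-s)=1$. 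By homogeneity $|j_\alpha(x)|_\alpha=\|x\|$, so every $j_\alpha$ becomes an isometric embedding of norm one.

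With this in hand, your $W/X_A\cong X$ argument is correct. The displayed formula gives the same conclusion in one line, since $\sum_\alpha\|j_\alpha(u_\alpha)\|^p=\sum_\alpha\|u_\alpha\|^p\ge\|x\|^p$. So $\rho^n_X$ is in fact an isometric embedding.

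In short, the paper's route is shorter but leaves its key claim unjustified, and false without normalization. Yours costs one renorming lemma and actually proves it.
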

	\begin{proof} Given $[\mathfrak{Z}] \in \Ext^n_{\pBan}(E,X)$, we take $\alpha = [\mathfrak{Z}]$ and observe there is a natural into isomorphism 
		\[ \iota_{\alpha}:Z_{1, \alpha} \longrightarrow \mathcal{N}_E^n(X)\]
		defined by the composition of the canonical inclusion of $Z_{1,\alpha}$ into $\left(\oplus_{\alpha \in A} Z_{1,\alpha}\right)_p$ with the quotient map $\left(\oplus_{\alpha \in A} Z_{1,\alpha}\right)_p \to \mathcal{N}_E^n(X)$.
		Moreover, there exists a \emph{unique} into isomorphism $\rho_X^n:X \to \mathcal N_E^n(X)$ defined by any of the compositions
		\begin{equation}
			\notag 
			\begin{tikzcd}
				X \arrow[r, "j_\alpha"] & Z_{1, \alpha} \arrow[r, "\iota_{\alpha}"] & \mathcal{N}_E^n(X)
			\end{tikzcd},
		\end{equation}
		where $\alpha\in A$, since all of them agree on $X$. Hence $\rho_X^n=\iota_{\alpha} j_{\alpha}=\iota_{\alpha'} j_{\alpha'}$ for any $\alpha, \alpha'\in A$.
		Finally, we can construct the following diagram
		\begin{equation}
	\notag
	\begin{tikzcd}[column sep=0.55cm]
		{[\mathfrak{Z}]}: & 0 \arrow[r] & X \arrow[r, "{j_\alpha}"] \arrow[d, "{\rho_X^n}"'] & {Z_{1,\alpha}} \arrow[r, "f_1"] \arrow[d] \arrow[ld, "{\iota_\alpha}"'] & {Z_{2,\alpha}} \arrow[r] \arrow[d, equal] & \cdots \arrow[r] & {Z_{n,\alpha}} \arrow[r] \arrow[d, equal] & E \arrow[r] \arrow[d, equal] & 0 \\
		{[\rho_X^n\mathfrak{Z}]}: & 0 \arrow[r] & {\mathcal{N}_E^n(X)} \arrow[r] & PO \arrow[r] & {Z_{2,\alpha}} \arrow[r] & \cdots \arrow[r] & {Z_{n,\alpha}} \arrow[r] & E \arrow[r] & 0
	\end{tikzcd}
\end{equation}
		where $\iota_{\alpha} j_{\alpha}=\rho_X^n$ just by the definition of the map $\rho_X^n$.
		This is enough to conclude that $[\rho_X^n\mathfrak{Z}]=0$, since $[\rho_X^n\mathfrak{Z}]$ can be broken into a (trivial) short exact sequence and an $(n-1)$-extension. 
	\end{proof}
	
	\subsection{The functor associated to the space}\label{sec:functor_n}
	Fix a $p$-Banach space $E$. For each $n \in \mathbb{N}$, we define the functor
\[
	\mathcal{N}_E^n(-): \boldsymbol{\pBan} \rightsquigarrow \boldsymbol{\pBan}
\]
which assigns the space $\mathcal{N}_E^n(X)$ to each $p$-Banach space $X$, and the operator $\mathcal{N}_E^n(T): \mathcal{N}_E^n(X) \to \mathcal{N}_E^n(Y)$ to any operator $T: X \to Y$, defined as follows. Let $A = \Ext^n_{\boldsymbol{\pBan}}(E,X)$ and $B = \Ext^n_{\boldsymbol{\pBan}}(E,Y)$ serve as index sets. For each $\alpha \in A$, there exists a unique element $\beta(\alpha) \in B$ and an operator $T_{\alpha, \beta(\alpha)}: Z_{1,\alpha} \to W_{1, \beta(\alpha)}$ making the following pushout diagram commute:
\begin{equation}\label{eq:PO-alfa}
	\begin{tikzcd}
		0 \arrow[r] & X \arrow[r, "j_\alpha"] \arrow[d, "T"'] & {Z_{1,\alpha}} \arrow[r] \arrow[d, "{T_{\alpha, \beta(\alpha)}}"] & {Z_{2,\alpha}} \arrow[r] \arrow[d, equal] & \cdots \arrow[r] & {Z_{n,\alpha}} \arrow[r] \arrow[d, equal] & E \arrow[r] \arrow[d, equal] & 0 \\
		0 \arrow[r] & Y \arrow[r, "j_{\beta(\alpha)}"'] & {W_{1,\beta(\alpha)}} \arrow[r] & {Z_{2,\alpha}} \arrow[r] & \cdots \arrow[r] & {Z_{n,\alpha}} \arrow[r] & E \arrow[r] & 0
	\end{tikzcd}
\end{equation}
The family of norm-one operators $\{T_{\alpha, \beta(\alpha)}: \alpha \in A\}$ induces an operator
\[
	\widehat{\mathcal{N}_E^n}(T): \left( \bigoplus_{\alpha \in A} Z_{1,\alpha} \right)_p \longrightarrow \left( \bigoplus_{\beta \in B} W_{1,\beta} \right)_p
\]
given by $\widehat{\mathcal{N}_E^n}(T)((z_\alpha)_{\alpha \in A}) = \big(T_{\alpha, \beta(\alpha)}(z_\alpha)\big)_{\alpha \in A}$. For any $\alpha, \alpha' \in A$, this operator satisfies
\[
	\widehat{\mathcal{N}_E^n}(T)(x_{\alpha,\alpha'}) = \begin{cases}
		T_{\alpha, \beta(\alpha)}(j_\alpha(x)) = j_{\beta(\alpha)}(Tx), & \text{if } \beta = \beta(\alpha), \\
		-T_{\alpha', \beta(\alpha')}(j_{\alpha'}(x)) = -j_{\beta(\alpha')}(Tx), & \text{if } \beta = \beta(\alpha'), \\
		0, & \text{otherwise,}
	\end{cases}
\]
that is, $\widehat{\mathcal{N}_E^n}(T)(x_{\alpha,\alpha'}) = (Tx)_{\beta(\alpha), \beta(\alpha')}$. Consequently, $\widehat{\mathcal{N}_E^n}(T)$ induces the desired operator
\[
	\mathcal{N}_E^n(T): \mathcal{N}_E^n(X) \longrightarrow \mathcal{N}_E^n(Y)
\]
between the corresponding quotient spaces. The proof of the following proposition is straightforward.
	
	\begin{prop}\label{commu}
		Given an operator $T:X \to Y$ between $p$-Banach spaces, for each $n \in \mathbb{N}$, the following square is commutative:
		\begin{equation}
			\label{square1}
			\begin{tikzcd}
				X \arrow[r, "\rho_X^n"] \arrow[d, "T"] & \mathcal{N}^n_E(X) \arrow[d, "\mathcal{N}^n_E(T)"] \\
				Y \arrow[r, "\rho^n_Y"]                & \mathcal{N}^n_E(Y)                                   
			\end{tikzcd}
		\end{equation}
	\end{prop}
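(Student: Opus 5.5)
The plan is a direct diagram chase at the level of representatives, using the explicit factorisations of $\rho^n_X$ and $\rho^n_Y$ from the proof of Theorem \ref{0}. Fix $x\in X$. We must show
\[
\mathcal{N}_E^n(T)\bigl(\rho^n_X(x)\bigr)=\rho^n_Y(Tx)\quad\text{in } \mathcal{N}_E^n(Y).
\]

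First, choose any $\alpha\in A=\Ext^n_{\pBan}(E,X)$; the zero class will do, so $A\neq\emptyset$. By the proof of Theorem \ref{0}, $\rho^n_X(x)=\iota_\alpha j_\alpha(x)$. This is the class modulo $X_A$ of the element $e_\alpha(j_\alpha x)\in(\oplus_{\alpha\in A}Z_{1,\alpha})_p$ supported at the single coordinate $\alpha$.

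Next, apply $\widehat{\mathcal N_E^n}(T)$ to this element. By definition it sends $e_\alpha(j_\alpha x)$ to the element supported at the coordinate $\beta(\alpha)$ with value
\[
T_{\alpha,\beta(\alpha)}(j_\alpha x)=j_{\beta(\alpha)}(Tx),
\]
by commutativity of the left square in the push-out diagram (\ref{eq:PO-alfa}). Since $\mathcal{N}_E^n(T)$ is induced on the quotients by $\widehat{\mathcal N_E^n}(T)$, we get $\mathcal N_E^n(T)(\rho^n_X x)=\iota_{\beta(\alpha)}j_{\beta(\alpha)}(Tx)$. By Theorem \ref{0} applied to $Y$, this equals $\rho^n_Y(Tx)$, because $\rho^n_Y=\iota_\beta j_\beta$ for every $\beta\in B$.

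The only point needing care is the well-definedness of $\widehat{\mathcal N_E^n}(T)$ and of the induced quotient map. Several $\alpha$ may share the same $\beta(\alpha)$, so the formula should be read as summing the contributions landing in the same coordinate $\beta$. This sum is absolutely $p$-summable, since $\sum_\alpha\|T_{\alpha,\beta(\alpha)}z_\alpha\|^p\le \sup_\alpha\|T_{\alpha,\beta(\alpha)}\|^p\sum_\alpha\|z_\alpha\|^p$, with the operators $T_{\alpha,\beta(\alpha)}$ taken norm-one as the paper states. It also carries $X_A$ into $Y_B$: the image of a generator $x_{\alpha,\alpha'}$ is $(Tx)_{\beta(\alpha),\beta(\alpha')}$, or $0$ when $\beta(\alpha)=\beta(\alpha')$.

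Once this is granted, as the paper does when defining $\mathcal N_E^n(T)$, the computation above is immediate, and the commutativity of (\ref{square1}) follows.
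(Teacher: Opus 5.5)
Your argument is correct and is exactly the chase the paper has in mind when it calls the proof straightforward. You write $\rho^n_X=\iota_\alpha j_\alpha$, push the representative through $\widehat{\mathcal N_E^n}(T)$ using the left square $T_{\alpha,\beta(\alpha)}j_\alpha=j_{\beta(\alpha)}T$ of \eqref{eq:PO-alfa}, and recognise $\iota_{\beta(\alpha)}j_{\beta(\alpha)}T=\rho^n_Y T$; your reading of $\widehat{\mathcal N_E^n}(T)$ as summing over the fibres of $\alpha\mapsto\beta(\alpha)$ (so that $x_{\alpha,\alpha'}\mapsto 0$ when $\beta(\alpha)=\beta(\alpha')$) is a sound repair of the paper's formula and does not change the route.
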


\subsection{An even bigger space}
Fix a $p$-Banach space $E$. Given another $p$-Banach space $X$ and $n \in \mathbb{N}$, the space $\mathcal{N}^n_E(X)$ serves to represent the Yoneda extension vector space $\Ext^n_{\boldsymbol{\pBan}}(E,X)$. To construct a single space that simultaneously recovers all the spaces $\Ext^n_{\boldsymbol{\pBan}}(E,X)$ across all degrees $n \in \mathbb{N}$, we need to combine the family of spaces $\{\mathcal{N}^n_E(X)\}_{n=1}^{\infty}$.
Hence we consider the $\ell_p$-direct sum $\left( \bigoplus_{n \in \mathbb{N}} \mathcal{N}^n_E(X) \right)_p$. Given $x \in X$ and $m, m' \in \mathbb{N}$, let $x_{m,m'}$ denote the element in $\left( \bigoplus_{n \in \mathbb{N}} \mathcal{N}^n_E(X) \right)_p$ defined by:
\[
	x_{m,m'}(m'') = \begin{cases}
		\rho_X^m(x) & \text{if } m'' = m, \\
		-\rho_X^{m'}(x) & \text{if } m'' = m', \\
		0 & \text{otherwise.}
	\end{cases}
\]
Finally, we define the space
\begin{equation}\label{bigspace}
	\mathcal{N}_E^X = \frac{\left( \bigoplus_{n \in \mathbb{N}} \mathcal{N}^n_E(X) \right)_p}{V_X},
\end{equation}
where $V_X$ is the closed linear span of all elements of the form $x_{m,m'}$ with $m, m' \in \mathbb{N}$.
\par Observe that for each $n \in \mathbb{N}$, there exist strict monomorphisms $\iota_n: \mathcal{N}_E^n(X) \to \mathcal{N}_E^X$ given by the composition of the canonical inclusion $\mathcal{N}_E^n(X) \to \left( \bigoplus_{n \in \mathbb{N}} \mathcal{N}^n_E(X) \right)_p$ with the canonical quotient map $\left( \bigoplus_{n \in \mathbb{N}} \mathcal{N}^n_E(X) \right)_p \to \mathcal{N}_E^X$. On the other hand, the strict monomorphisms $\rho_X^n: X \to \mathcal{N}_E^n(X)$ satisfy $\iota_n \rho_X^n = \iota_{n'} \rho_X^{n'}$ for all $n, n' \in \mathbb{N}$. Consequently, we can define a strict monomorphism $\rho_X: X \to \mathcal{N}_E^X$ such that $\rho_X = \iota_n \rho_X^n$ for every $n \in \mathbb{N}$. Thus, we arrive at:

\begin{thm}\label{equal0}
	Let $E$ be $p$-Banach space. For any $p$-Banach space $X$ and for any element $[\mathfrak{Z}] \in \Ext^n_{\pBan}(E,X)$ it holds that $[\rho_X \mathfrak{Z}]=0$.
\end{thm}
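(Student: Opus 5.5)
The plan is to reduce the statement to Theorem \ref{0} using the factorization $\rho_X = \iota_n \rho_X^n$ and the functoriality of push-outs. Fix $n \in \mathbb{N}$ and $[\mathfrak{Z}] \in \Ext^n_{\pBan}(E,X)$. By construction of $\mathcal{N}_E^X$ we have $\rho_X = \iota_n \rho_X^n$, where $\iota_n : \mathcal{N}_E^n(X) \to \mathcal{N}_E^X$ is the canonical strict monomorphism. The first step is the standard compatibility of push-outs with composition:
\[
[(\iota_n \rho_X^n)\mathfrak{Z}] = [\iota_n(\rho_X^n \mathfrak{Z})].
\]
In other words, $\Ext^n_{\pBan}(E,-)$ is a functor, so $\Ext^n_{\pBan}(E,\iota_n\rho^n_X) = \Ext^n_{\pBan}(E,\iota_n)\circ \Ext^n_{\pBan}(E,\rho^n_X)$. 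To justify this concretely, we check that the iterated push-out space $PO(\iota_n, PO(\rho_X^n, j_\alpha))$ satisfies the universal property of $PO(\iota_n\rho^n_X, j_\alpha)$. This gives an isomorphism between the second terms, compatible with the rest of the extension, and hence a morphism of $n$-extensions with fixed ends.

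The second step invokes Theorem \ref{0}: $[\rho_X^n\mathfrak{Z}] = 0$ in $\Ext^n_{\pBan}(E,\mathcal{N}_E^n(X))$.

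The third step uses the fact that $\Ext^n_{\pBan}(E,\iota_n)$ is a linear map, and so sends $0$ to $0$. Alternatively, one can argue directly: push-out preserves the equivalence relation (recalled in Subsection \ref{sec:pre_2.4}), and the push-out of the trivial $n$-extension is trivial. Combining the three steps gives $[\rho_X\mathfrak{Z}] = [\iota_n(\rho^n_X\mathfrak{Z})] = 0$.

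As a more hands-on alternative, which avoids relying on linearity, we can mimic the proof of Theorem \ref{0} directly. With $\alpha=[\mathfrak{Z}]$, the operator $\iota_n\iota_\alpha : Z_{1,\alpha}\to \mathcal{N}_E^X$ satisfies $\iota_n\iota_\alpha j_\alpha = \iota_n\rho^n_X = \rho_X$. So $\rho_X$ extends through $j_\alpha$. Consequently the first short exact piece of $\rho_X\mathfrak{Z}$ splits, and the spliced $n$-extension is trivial by the splicing remark.

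The only potential obstacle is the compatibility of push-outs with composition in the first step. It is routine, but it is the point where one must check that the induced map between push-out spaces is a bounded isomorphism. The Short Five Lemma in the quasi-abelian category $\pBan$ handles this, so I expect no real difficulty. The direct extension argument sidesteps it entirely, and I would present that one as the main proof.
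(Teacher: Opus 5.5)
Your proposal is correct and is essentially the paper's proof, which is the one-line observation that the statement follows from Theorem~\ref{0} together with the factorization $\rho_X=\iota_n\rho_X^n$, i.e.\ $[\rho_X\mathfrak{Z}]=\iota_n[\rho_X^n\mathfrak{Z}]=0$. Your ``hands-on'' variant, using that $\iota_n\iota_\alpha$ extends $\rho_X$ through $j_\alpha$, is the proof of Theorem~\ref{0} rerun with $\iota_n\iota_\alpha$ in place of $\iota_\alpha$; as there, it is applied to the chosen representative of $\alpha$, which is legitimate because push-outs respect equivalence of extensions.
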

\begin{proof} It is a direct consequence of Theorem (\ref{0}) and the fact that $\rho_X=\iota_n\rho_X^n$.
\end{proof}
\par Furthermore, using the functors $\mathcal{N}_E^n(-):\pBan \rightsquigarrow \pBan$ introduced in Section \ref{sec:functor_n}, we can produce the following functor:
\begin{align*}
	\mathcal{N}_E^{(-)}:  \pBan & \rightsquigarrow \pBan \\
	X&  \mapsto \mathcal{N}_E^{(-)}(X):=\mathcal{N}_E^X \\
	[T:X \to Y] & \mapsto \mathcal{N}_E^{(-)}(T)=\mathcal{N}_E^T:\mathcal{N}_E^X \to \mathcal{N}_E^Y,
\end{align*}
The map $\mathcal{N}_E^T:\mathcal{N}_E^X \to \mathcal{N}_E^Y$ is defined by just considering the maps $\mathcal N_E^n$ for $n\in \mathbb N$ and defining
\[(\mathcal{N}_E^n)_{n=1}^\infty:\left( \oplus_{n \in \mathbb N} \mathcal{N}^n_E(X) \right)_p \to \left( \oplus_{n \in \mathbb N} \mathcal{N}^n_E(Y) \right)_p. \ \]
Now, we just have to observe that $(\mathcal N_E^n)_{n=1}^\infty$ takes $V_X$ inside $V_Y$. Therefore, $(\mathcal{N}_E^n)_{n=1}^\infty$ induces a map $\mathcal{N}_E^T:\mathcal{N}_E^X \to \mathcal{N}_E^Y$ between the corresponding quotient spaces  (see \ref{equal0}), as we wanted.

\begin{thm}
	Given an operator $T:X \to Y$ between $p$-Banach spaces, the following square commutes:
	\begin{equation}
		\label{square2}
		\begin{tikzcd}
			X \arrow[r, "\rho_X"] \arrow[d, "T"] & \mathcal{N}_E^X \arrow[d, "\mathcal{N}_E^T"] \\
			Y \arrow[r, "\rho_Y"]                & \mathcal{N}_E^Y                                   
		\end{tikzcd}
	\end{equation}
\end{thm}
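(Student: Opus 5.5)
The plan is to reduce the commutativity of square (\ref{square2}) to the degree-wise squares (\ref{square1}) of Proposition \ref{commu}, using the factorisation $\rho_X=\iota_n\rho_X^n$ established just before Theorem \ref{equal0}. Write $\iota_n^X:\mathcal{N}_E^n(X)\to\mathcal{N}_E^X$ and $\iota_n^Y:\mathcal{N}_E^n(Y)\to\mathcal{N}_E^Y$ for the canonical strict monomorphisms, and $\pi_X$, $\pi_Y$ for the quotient maps from the $\ell_p$-sums onto $\mathcal{N}_E^X$, $\mathcal{N}_E^Y$.

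First I will make the definition of $\mathcal{N}_E^T$ explicit. The operator
\[
\widehat{T}:=(\mathcal{N}_E^n(T))_{n=1}^\infty:\left(\textstyle\bigoplus_{n\in\mathbb N}\mathcal{N}_E^n(X)\right)_p\to\left(\textstyle\bigoplus_{n\in\mathbb N}\mathcal{N}_E^n(Y)\right)_p
\]
acts coordinatewise. It is bounded provided $\sup_n\|\mathcal{N}_E^n(T)\|<\infty$. This holds because each $\mathcal{N}_E^n(T)$ is induced by the operator $\widehat{\mathcal{N}_E^n}(T)$ built from the $T_{\alpha,\beta(\alpha)}$, whose norms are controlled by $\|T\|$ (push-out operators have norm at most $\|T\|$, up to the normalisation $\|j_\alpha\|=1$).

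The key identity is $\widehat{T}(x_{m,m'})=(Tx)_{m,m'}$, which I will check using Proposition \ref{commu}:
\[
\mathcal{N}_E^m(T)\rho_X^m(x)=\rho_Y^m(Tx),\qquad \mathcal{N}_E^{m'}(T)\bigl(-\rho_X^{m'}(x)\bigr)=-\rho_Y^{m'}(Tx).
\]
By continuity and linearity it follows that $\widehat{T}(V_X)\subseteq V_Y$. Hence $\mathcal{N}_E^T$ is well defined by $\mathcal{N}_E^T\pi_X=\pi_Y\widehat{T}$, and consequently $\mathcal{N}_E^T\iota_n^X=\iota_n^Y\mathcal{N}_E^n(T)$ for every $n$.

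Commutativity then follows by fixing any $n\in\mathbb N$ (say $n=1$) and computing
\[
\mathcal{N}_E^T\rho_X=\mathcal{N}_E^T\iota_n^X\rho_X^n=\iota_n^Y\mathcal{N}_E^n(T)\rho_X^n=\iota_n^Y\rho_Y^nT=\rho_YT,
\]
where the third equality is Proposition \ref{commu}.

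The only genuine obstacle is the boundedness and well-definedness of $\widehat{T}$, that is, the uniform bound on $\|\mathcal{N}_E^n(T)\|$ and the fact that $\widehat{T}$ maps $V_X$ into $V_Y$. Once these are in place, the diagram chase above is formal.
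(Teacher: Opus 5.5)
Your proof is correct and follows the route the paper intends. The paper states this theorem without proof, right after observing that $(\mathcal{N}_E^n(T))_{n}$ maps $V_X$ into $V_Y$; your argument passes to quotients to get $\mathcal{N}_E^T\iota_n^X=\iota_n^Y\mathcal{N}_E^n(T)$, then uses $\rho_X=\iota_n\rho_X^n$ and Proposition \ref{commu}.

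The one loose point is your parenthetical justification of $\sup_n\|\mathcal{N}_E^n(T)\|<\infty$. The $T_{\alpha,\beta(\alpha)}$ land in the fixed representatives $W_{1,\beta(\alpha)}$, not in the push-out spaces, so the normalisation $\|j_\alpha\|=1$ alone does not bound them by $\|T\|$. That bound belongs to the well-definedness of $\mathcal{N}_E^T$, which the paper asserts in its construction and the statement presupposes.
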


\section{Derived functors according to Grothendieck} \label{grothendieck}
In this section, we study the derived functors of the covariant, left-exact operator functor $\mathcal{L}(E,-): \boldsymbol{\pBan} \rightsquigarrow \mathbf{V}$ via an axiomatic approach based on universal $\delta$-functors. Although the concepts of $\delta$-functors and universal $\delta$-functors were originally introduced by Grothendieck \cite{tohoku} in the context of abelian categories, these definitions naturally adapt to quasi-abelian categories by relying on the notion of \emph{strict short exact sequences} \cite{quillen1973}. For the sake of completeness, we recall these adapted definitions below.
\begin{defi}
    Let $\mathcal{C}$ and $\mathcal{D}$ two quasi-abelian categories. A \emph{covariant $\delta$-functor} is a pair $\mathcal{G}=((\mathcal{G}_n)_{n=0}^{\infty}, (\delta_n)_{n=0}^{\infty})$ made of:
    \begin{itemize}
        \item a family $(\mathcal{G}_n)_{n=0}^{\infty}$ of additive functors $\mathcal{G}_n:\mathcal{C} \rightsquigarrow \mathcal{D}$;
        \item a family $(\delta_n)_{n=0}^{\infty}$ of morphisms such that, for each $n \geq 0$ and each strict short exact sequence $\begin{tikzcd}[cramped, column sep=1.5em]
		0 \arrow[r] & A \arrow[r] & B \arrow[r] & C \arrow[r] & 0 
	\end{tikzcd}$ in $\mathcal{C}$, the morphism $\delta_n: \mathcal{G}_n(C) \to \mathcal{G}_{n+1}(A)$ is called the \emph{connecting morphism}.
    \end{itemize}
    Moreover, we require the following axioms to be satisfied:
    \begin{itemize}
        \item Whenever we have two strict short exact sequences $\begin{tikzcd}[cramped, column sep=1.5em]
		0 \arrow[r] & A \arrow[r] & B \arrow[r] & C \arrow[r] & 0 
	\end{tikzcd} $ and $\begin{tikzcd}[cramped, column sep=1.5em]
		0 \arrow[r] & A' \arrow[r] & B' \arrow[r] & C' \arrow[r] & 0 
	\end{tikzcd} $ in $\mathcal{C}$ and a morphism between them, the corresponding square
    \begin{equation}
        \notag
        \begin{tikzcd}
\mathcal{G}_n(C) \arrow[d] \arrow[r, "\delta_n"] & \mathcal{G}_{n+1}(A) \arrow[d] \\
\mathcal{G}_n(C') \arrow[r, "\delta_n"]          & \mathcal{G}_{n+1}(A')        
\end{tikzcd}
    \end{equation}
    commutes.
    \item For each strict short exact sequence $\begin{tikzcd}[cramped, column sep=1.5em]
		0 \arrow[r] & A \arrow[r] & B \arrow[r] & C \arrow[r] & 0 
	\end{tikzcd} $ in $\mathcal{C}$ the corresponding long sequence in $\mathcal{D}$ 
    \begin{equation}
        \label{eq:long_exact} 
        \begin{tikzcd}
\cdots \arrow[r] & \mathcal{G}_n(A) \arrow[r] & \mathcal{G}_n(B) \arrow[r] & \mathcal{G}_n(C) \arrow[r, "\delta_n"] & \mathcal{G}_{n+1}(A) \arrow[r] & \cdots
\end{tikzcd}
    \end{equation}
    is a \emph{cochain complex,} that is, the composition of two consecutive morphism is zero.
    \end{itemize}
Additionally, a $\delta$-functor is said to be \emph{exact} if the long sequence \eqref{eq:long_exact} is exact. 
\end{defi}

\begin{defi}
	A covariant $\delta$-functor $\mathcal{G}=((\mathcal{G}_n)_{n=0}^{\infty}, (\delta_n)_{n=0}^{\infty})$ is \emph{universal} if for any other covariant $\delta$-functor $\mathcal{G}'=((\mathcal{G}'_n)_{n=0}^{\infty}, (\delta'_n)_{n=0}^{\infty})$ and any natural transformation $f_0:\mathcal{G}_0 \to \mathcal{G}'_0$, there exists a unique family of natural transformations $g_n: \mathcal{G}_n \to \mathcal{G}'_n$ for $n \ge 0$ commuting with the connecting homomorphisms and such that $g_0=f_0$.
\end{defi}

One of the main tools to establish universality of a $\delta$-functor without appealing to injective resolutions is the concept of an \emph{effaceable functor}.

\begin{defi}
	Let $\mathcal{C}$ and $\mathcal{D}$ be two quasi-abelian categories. An additive functor $\mathcal{F}: \mathcal{C} \rightsquigarrow \mathcal{D}$ is called \emph{effaceable} if for every object $C$ of $\mathcal{C}$ there exists a strict monomorphism $u: C \to E$ such that $\mathcal{F}(u)=0$.
\end{defi}

A more practical characterization of universal $\delta$-functors was established in \cite[Proposition 2.2.1]{tohoku} using effaceability. For our purposes, we only state the sufficient condition for universality to keep the section self-contained:

\begin{prop}[{\cite[Proposition 2.2.1]{tohoku}}]
	Let $\mathcal{C}$ and $\mathcal{D}$ be two quasi-abelian categories, and let $\mathcal{G}=((\mathcal{G}_n)_{n=0}^{\infty}, (\delta_n)_{n=0}^{\infty})$ be a covariant $\delta$-functor. If $\mathcal{G}_n$ is effaceable for every $n \ge 1$, then $\mathcal{G}$ is a universal $\delta$-functor.
\end{prop}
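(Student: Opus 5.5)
We prove Grothendieck's criterion in the quasi-abelian setting by induction on $n$, using effaceability in place of injective resolutions. Fix a $\delta$-functor $\mathcal{G}'$ and a natural transformation $f_0:\mathcal{G}_0\to\mathcal{G}'_0$. Suppose $g_0=f_0,\dots,g_n$ have been constructed, are natural, and commute with the connecting morphisms up to degree $n$. We must define $g_{n+1}:\mathcal{G}_{n+1}\to\mathcal{G}'_{n+1}$, show it is unique, show it is natural, and show it commutes with $\delta_n$.

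\textbf{Step 1 (definition and uniqueness).} Fix an object $A$. Since $\mathcal{G}_{n+1}$ is effaceable, there is a strict monomorphism $u:A\to B$ with $\mathcal{G}_{n+1}(u)=0$. Let $C=\operatorname{coker}(u)$; then $0\to A\to B\to C\to 0$ is strict short exact, because in a quasi-abelian category a strict monomorphism is the kernel of its cokernel. By exactness of the long sequence for $\mathcal{G}$ at $\mathcal{G}_{n+1}(A)$, we get $\mathcal{G}_{n+1}(A)=\ker \mathcal{G}_{n+1}(u)=\Im\delta_n$. Hence $\delta_n:\mathcal{G}_n(C)\to\mathcal{G}_{n+1}(A)$ is an epimorphism with kernel $\Im(\mathcal{G}_n(B)\to\mathcal{G}_n(C))$. (We read the implicit hypothesis that $\mathcal{G}$ is exact, as Grothendieck does.)

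Any compatible $g_{n+1}$ must satisfy $g_{n+1}(A)\delta_n=\delta'_n g_n(C)$, which forces uniqueness. For existence, note that $\delta'_n g_n(C)$ kills the image of $\mathcal{G}_n(B)$: by naturality of $g_n$, this composite equals $\delta'_n\,\mathcal{G}'_n(B\to C)\,g_n(B)=0$, since $\mathcal{G}'$ is a complex. So $\delta'_n g_n(C)$ factors through the coimage of $\delta_n$, which is $\mathcal{G}_{n+1}(A)$. Here we use that in $\mathbf{V}$, or in any abelian target, an epimorphism is the cokernel of its kernel. If $\mathcal{D}$ is merely quasi-abelian, we need $\delta_n$ to be a strict epimorphism, and we note this as a point to check.

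\textbf{Step 2 (independence of the choice, and naturality).} This is the main obstacle. Given a morphism $T:A\to A'$ and effacing monomorphisms $u:A\to B$ and $u':A'\to B'$, we would like to compare the two constructions via a morphism of short exact sequences. However, $T$ need not extend to $B\to B'$, since there are no injectives. The trick is to replace $B'$ by the pushout $B''$ of $u$ and $u'T$, or more simply to use $B\oplus B'$ with $u''=(u,u'T)$. This $u''$ is again a strict monomorphism, since $u$ is, and $\mathcal{G}_{n+1}(u'')=0$ because $\mathcal{G}_{n+1}$ is additive and both components vanish. Mapping $0\to A\to B\oplus B'\to C''\to0$ to both effacing sequences (the identity on $A$ with the projection to $B$; and $T$ with the projection to $B'$), and using the first $\delta$-functor axiom for $\mathcal{G}$ and $\mathcal{G}'$ together with the naturality of $g_n$ on the cokernels, we obtain $g_{n+1}(A')\mathcal{G}_{n+1}(T)=\mathcal{G}'_{n+1}(T)g_{n+1}(A)$. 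The surjectivity of $\delta_n$ from Step 1 lets us cancel it. Taking $T=\Id_A$ shows independence of the choice.

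\textbf{Step 3 (compatibility with arbitrary sequences).} Given an arbitrary strict short exact sequence $0\to A\to X\to Y\to0$, compare it with an effacing sequence $0\to A\to B\oplus X\to C''\to 0$. Via the projections, this sequence maps both to the given one and to the effacing sequence for $A$ used in Step 1. Naturality of $\delta,\delta'$ and of $g_n$ then yields $g_{n+1}(A)\delta_n=\delta'_n g_n(Y)$. Induction completes the proof.
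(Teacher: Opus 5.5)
The paper does not prove this proposition; it only cites Tohoku. Your Steps 1 and 2 reproduce Grothendieck's argument correctly.

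You are right to add the exactness hypothesis. The paper's statement omits it, and without it the claim is false: take $\mathcal{G}_0=\cL(E,-)$ and $\mathcal{G}_n=0$ for $n\ge 1$. The trick $u''=(u,u'T)\colon A\to B\oplus B'$ does give independence of choices and naturality.

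On your caveat for a non-abelian $\mathcal D$: this cannot be checked from the hypotheses. Exactness in $\mathcal D$ has to be read as strict exactness, so that $\delta_n$ is a strict epimorphism. For $\mathcal D=\mathbf V$, the only case the paper uses, this is automatic.

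\textbf{The gap is in Step 3.} Write the given sequence as $j\colon A\to X$, $q\colon X\to Y$. For both projections to be morphisms of sequences with $\Id_A$ on the left, the auxiliary map must be $u''=(u,j)\colon A\to B\oplus X$.

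This $u''$ is not effacing, since $\mathcal{G}_{n+1}(u'')$ has the component $\mathcal{G}_{n+1}(j)$, which need not vanish. That much is harmless: comparing with $u$ via $\mathrm{pr}_B$ still gives $g_{n+1}(A)\delta_n^{(u'')}={\delta'}_n^{(u'')}g_n(C'')$.

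The real problem is the direction of the second comparison. The morphism $(\Id_A,\mathrm{pr}_X,\gamma)$ goes \emph{from} the auxiliary sequence \emph{to} the given one. Naturality therefore only gives $\delta_n^{(j)}\mathcal{G}_n(\gamma)=\delta_n^{(u'')}$, and you end up with
\[
g_{n+1}(A)\,\delta_n^{(j)}\,\mathcal{G}_n(\gamma)={\delta'}_n^{(j)}\,g_n(Y)\,\mathcal{G}_n(\gamma).
\]
The desired identity follows from this only if $\mathcal{G}_n(\gamma)\colon\mathcal{G}_n(C'')\to\mathcal{G}_n(Y)$ is an epimorphism. You never address this, and surjectivity of $\delta_n^{(u'')}$ does not help.

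\textbf{Standard repair: efface the middle term.}
\begin{enumerate}
\item Choose a strict monomorphism $v\colon X\to M$ with $\mathcal{G}_{n+1}(v)=0$.
\item Then $vj$ is a strict monomorphism with $\mathcal{G}_{n+1}(vj)=\mathcal{G}_{n+1}(v)\mathcal{G}_{n+1}(j)=0$, i.e.\ an effacing monomorphism for $A$.
\item The triple $(\Id_A,v,\bar v)$ maps the given sequence to $0\to A\to M\to\operatorname{coker}(vj)\to 0$.
\item By Step 2 you may compute $g_{n+1}(A)$ using $vj$. Naturality now points the right way:
\[
g_{n+1}(A)\delta_n^{(j)}=g_{n+1}(A)\delta_n^{(vj)}\mathcal{G}_n(\bar v)={\delta'}_n^{(vj)}\mathcal{G}'_n(\bar v)g_n(Y)={\delta'}_n^{(j)}g_n(Y).
\]
\end{enumerate}

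Alternatively, you can keep your construction if you add the missing argument. The sequence $0\to B\to C''\xrightarrow{\gamma}Y\to 0$ is the push-out of the given sequence along $-u$. Its connecting morphism is therefore $-\mathcal{G}_{n+1}(u)\delta_n^{(j)}=0$, and exactness makes $\mathcal{G}_n(\gamma)$ an epimorphism.
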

With this setup in hand, we formulate the concept of a right derived functor in the sense of Grothendieck.

\begin{defi}\label{def:delta-functors_derivado}
	Let $\mathcal{C}$ and $\mathcal{D}$ be two quasi-abelian categories, and let $\mathcal{F}: \mathcal{C} \rightsquigarrow \mathcal{D}$ be an additive, covariant, left-exact functor. A \emph{right derived functor} of $\mathcal{F}$ is a universal $\delta$-functor $\mathcal{RF}=((\mathcal{RF}_n)_{n=0}^{\infty}, (\delta_n)_{n=0}^{\infty})$ such that $\mathcal{RF}_0 = \mathcal{F}$. For each $n \ge 0$, the component $\mathcal{RF}_n: \mathcal{C} \rightsquigarrow \mathcal{D}$ is called the \emph{$n$-th right derived functor} of $\mathcal{F}$.
\end{defi}

\begin{rem}
	By the universality property, if a right derived functor of $\mathcal{F}$ exists, it is unique up to a natural isomorphism of $\delta$-functors.
\end{rem}

Analogous definitions can be formulated for contravariant functors or right-exact functors in a dual manner.

\medskip

We now specialize these abstract notions to our setting. To establish the relative derivation process for the left-exact operator functor $\mathcal{L}(E,-): \boldsymbol{\pBan} \rightsquigarrow \mathbf{V}$, the natural candidate is provided by the sequence of Yoneda extension groups. The first essential step is to verify that this collection satisfies the axioms of a $\delta$-functor.

\begin{prop}
	Let $E$ be a $p$-Banach space. The family of covariant functors $\Ext^n_{\boldsymbol{\pBan}}(E,-): \boldsymbol{\pBan} \rightsquigarrow \mathbf{V}$, associated with the Yoneda $n$-extensions of $p$-Banach spaces, forms a covariant exact $\delta$-functor in degrees $n \in \mathbb{N}_0$. Furthermore, $\Ext^0_{\boldsymbol{\pBan}}(E,X) = \mathcal{L}_{\boldsymbol{\pBan}}(E,X)$.
\end{prop}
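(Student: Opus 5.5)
The plan is to verify the $\delta$-functor axioms directly from the Yoneda description, defining the connecting morphisms by splicing. Fix a strict short exact sequence $\mathfrak{S}: 0 \to A \xrightarrow{i} B \xrightarrow{q} C \to 0$ in $\pBan$. For $n \ge 1$ set
\[
\delta_n([\mathfrak{X}]) = [\mathfrak{S}\mathfrak{X}] \in \Ext^{n+1}_{\pBan}(E,A), \qquad [\mathfrak{X}] \in \Ext^n_{\pBan}(E,C),
\]
and for $n=0$ set $\delta_0(T) = [\mathfrak{S}T]$, the pull-back of $\mathfrak{S}$ along $T \in \mathcal{L}(E,C)$. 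I would first check that $\delta_n$ is well defined: a morphism with fixed ends $\mathfrak{X} \to \mathfrak{X}'$ extends, by the identity on $\mathfrak{S}$, to a morphism with fixed ends $\mathfrak{S}\mathfrak{X} \to \mathfrak{S}\mathfrak{X}'$. Linearity follows because Baer sums and scalar multiples are computed by pull-backs at the $E$-end, which commute with splicing on the left. Additivity of each $\Ext^n_{\pBan}(E,-)$ is standard: push-out along $T+T'$ agrees with the Baer sum of the push-outs, by the usual diagram argument with $\Delta$ and $S$. The identification $\Ext^0 = \mathcal{L}$ was already recorded in Section \ref{sec:pre_2.5}.

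Next I would prove naturality. Take a morphism $(a,b,c)$ from $\mathfrak{S}$ to $\mathfrak{S}'$. We must show $a\mathfrak{S}\mathfrak{X} \sim \mathfrak{S}' c\mathfrak{X}$. The key identity is $a\mathfrak{S} \sim \mathfrak{S}'c$ in $\Ext^1_{\pBan}(C,A')$, which is the classical fact that a morphism of short exact sequences factors through the push-out of the upper row and through the pull-back of the lower row; in $\pBan$ it comes from the universal properties of $PO$ and $PB$. Combining this with the associativity relation $(\mathfrak{S}'c)\mathfrak{X} \sim \mathfrak{S}'(c\mathfrak{X})$, which follows from a morphism of spliced sequences, gives the required commutativity.

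The main work is exactness of
\[
\cdots\to\Ext^n(E,A)\xrightarrow{i_*}\Ext^n(E,B)\xrightarrow{q_*}\Ext^n(E,C)\xrightarrow{\delta_n}\Ext^{n+1}(E,A)\to\cdots .
\]
Exactness is stronger than the cochain-complex condition, and the introduction already asserts it for \eqref{eq:longhomology}. I would follow the argument for Yoneda Ext in exact categories (e.g.\ \cite{exact_categ}), using only the quasi-abelian properties recorded in Proposition \ref{prop:pban_quasi_abelian}: strict monos are stable under push-out and strict epis under pull-back. The compositions vanish for three reasons. First, $q i = 0$ factors through $0$. Second, $\mathfrak{S}q$ is trivial because $q$ lifts through itself. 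Third, $i\mathfrak{S}$ is trivial because $i$ extends through itself. Each of these is covered by the remarks on pull-backs and push-outs in Section \ref{sec:pre_2.4}.

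For the reverse inclusions, cut $\mathfrak{X}$ as $\mathfrak{X}_1\mathfrak{X}'$ with $\mathfrak{X}_1 : 0\to C\to X_1\to K\to 0$, which reduces each case to a statement about $1$-extensions. At $\Ext^n(E,C)$: if $\mathfrak{S}\mathfrak{X}\sim 0$, I would produce $\mathfrak{Y}$ with $q\mathfrak{Y}\sim\mathfrak{X}$. For $n=0$ this is the lifting criterion. For $n\ge1$ it requires analysing the zigzag witnessing triviality, and this is the step I expect to be hardest. I would try the standard trick first: $\mathfrak{S}\mathfrak{X}_1$ is equivalent to an extension whose second term maps onto the pull-back. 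The exactness checks at the other two spots are handled similarly.
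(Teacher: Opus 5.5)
\textbf{The gap is exactness.} Everything except exactness matches the paper. You define the connecting maps by splicing (by pull-back when $n=0$). Your naturality argument, $a\mathfrak S\sim\mathfrak S'c$ followed by moving $c$ across the splice point, is what the paper encodes with the morphism $(\Id_{Y'},\overline{\beta},\gamma)\colon\alpha\mathfrak X\to\mathfrak X'$, stated more cleanly.

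One small correction on linearity. The Baer sum on $\Ext^n_{\pBan}(E,C)$ also uses a push-out along the sum operator $S_C\colon C\oplus_p C\to C$, which sits exactly at the splice point. So linearity of $\delta_n$ does not follow from ``pull-backs at the $E$-end'' alone. You also need $\mathfrak S\, S_C\sim S_A(\mathfrak S\oplus_p\mathfrak S)$. Your own naturality lemma gives this when applied to the morphism $(S_A,S_B,S_C)\colon\mathfrak S\oplus_p\mathfrak S\to\mathfrak S$.

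Exactness is the only non-formal content of the statement. Apart from the degree-$0$ lifting criterion, you prove none of the inclusions $\ker\subseteq\Im$, and the mechanism you suggest would not prove them. Cutting $\mathfrak X=\mathfrak X_1\mathfrak X'$ does not reduce anything to $1$-extensions. Triviality of a spliced extension is a zigzag condition on the whole extension and does not descend to the short factor.
\begin{itemize}
\item At $\Ext^n_{\pBan}(E,B)$ with $n\ge 2$: as soon as $\mathfrak Y'\sim 0$, the hypothesis $q\mathfrak Y=(q\mathfrak Y_1)\mathfrak Y'\sim0$ holds for every $\mathfrak Y_1$. So you cannot infer $q\mathfrak Y_1\sim 0$ and run the $n=1$ argument (extend $q$ to $\hat q\colon Y_1\to C$ and pass to $0\to A\to\ker\hat q\to K\to 0$). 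You would have to replace the representative of $\mathfrak Y$, and controlling that replacement is exactly the zigzag analysis you postpone.
\item At $\Ext^n_{\pBan}(E,C)$: the hypothesis $\mathfrak S\mathfrak X_1\mathfrak X'\sim 0$ says nothing about $\mathfrak S\mathfrak X_1$. Already for $n=1$ it is a statement about $2$-extensions, not $1$-extensions.
\end{itemize}
So ``handled similarly'' is not justified.

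The paper does not prove this step either; it imports it from Yoneda theory in exact categories, \cite[Proposition 5.20]{exact_categ}. If you actually invoke that result here, which your mention of \cite{exact_categ} suggests, the gap closes and your argument becomes the paper's.

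For a self-contained proof, use the fact that $\pBan$ has enough projectives:
\begin{enumerate}
\item Resolve $E$ by spaces $\ell_p(\Gamma)$ to get a projective resolution $P_\bullet\to E$.
\item Use the standard comparison $\Ext^n_{\pBan}(E,-)\cong H^n(\cL(P_\bullet,-))$.
\item Note that $0\to\cL(P_\bullet,A)\to\cL(P_\bullet,B)\to\cL(P_\bullet,C)\to 0$ is exact, because each $P_k$ is projective.
\item Apply the snake lemma, and check that its connecting map agrees with splicing by $\mathfrak S$ up to sign.
\end{enumerate}
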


  We will write $\Ext_{\boldsymbol{\pBan}}(E,-) = (\Ext^n_{\boldsymbol{\pBan}}(E,-))_{n=0}^{\infty}$.
    
\begin{proof}
For each $n \in \mathbb{N}_0$ and for each short exact sequence 
$$\begin{tikzcd}
	\mathfrak{X}: & 0 \arrow[r] & Y \arrow[r, "j"] & X \arrow[r, "q"] & Z \arrow[r] & 0 
\end{tikzcd}$$
in $\boldsymbol{\pBan}$, we consider the following long homology sequence:
\begin{equation}\label{eq:short_pBan}
	\begin{tikzcd}
		& \cdots \arrow[r] \arrow[dl, phantom, ""{coordinate, name=Z}] & {\Ext_{\boldsymbol{\pBan}}^n(E,Z)} \arrow[dll, "\omega^n" description, rounded corners=6pt, to path={ -- ([xshift=6.5ex]\tikztostart.east) |- (Z) [very near end] \tikztonodes -| ([xshift=-3ex]\tikztotarget.west) -- (\tikztotarget)}] \\
		{\Ext_{\boldsymbol{\pBan}}^{n+1}(E,Y)} \arrow[r] & {\Ext_{\boldsymbol{\pBan}}^{n+1}(E,X)} \arrow[r] \arrow[dl, phantom, ""{coordinate, name=ZZ}] & {\Ext_{\boldsymbol{\pBan}}^{n+1}(E,Z)} \arrow[dll, "\omega^{n+1}" description, rounded corners=6pt, to path={ -- ([xshift=6.5ex]\tikztostart.east) |- (ZZ) [very near end] \tikztonodes -| ([xshift=-3ex]\tikztotarget.west) -- (\tikztotarget)}] \\
		{\Ext_{\boldsymbol{\pBan}}^{n+2}(E,Y)} \arrow[r] & \cdots &
	\end{tikzcd}
\end{equation}
In diagram \eqref{eq:short_pBan}, the connecting homomorphisms $\omega^n: \Ext^n_{\boldsymbol{\pBan}}(E,Z) \to \Ext^{n+1}_{\boldsymbol{\pBan}}(E,Y)$ are defined by splicing an extension class $[\mathfrak{Z}] \in \Ext^n_{\boldsymbol{\pBan}}(E,Z)$ with the short exact sequence $\mathfrak{X}$, that is, $\omega^n([\mathfrak{Z}]) = [\mathfrak{X} \mathfrak{Z}]$. The exactness of the long homology sequence (\ref{eq:short_pBan}) is a standard consequence of the Yoneda extension theory in exact categories (see \cite[Proposition 5.20]{exact_categ}), and the isomorphism $\Ext^0_{\boldsymbol{\pBan}}(E,X) = \mathcal{L}_{\boldsymbol{\pBan}}(E,X)$ follows directly from the definition of $0$-extensions as operators in $\boldsymbol{\pBan}$.   

\par Finally, to verify the naturality of $\omega^n$, consider a morphism of short exact sequences $(\alpha, \beta, \gamma): \mathfrak{X} \to \mathfrak{X}'$ in $\boldsymbol{\pBan}$:
\begin{equation}\notag
	\begin{tikzcd}
		\mathfrak{X}: & 0 \arrow[r] & Y \arrow[r, "j"] \arrow[d, "\alpha"'] & X \arrow[r, "q"] \arrow[d, "\beta"] & Z \arrow[r] \arrow[d, "\gamma"] & 0 \\
		\mathfrak{X}': & 0 \arrow[r] & Y' \arrow[r, "j'"'] & X' \arrow[r, "q'"'] & Z' \arrow[r] & 0
	\end{tikzcd}
\end{equation}
We must show that the following square commutes:
\begin{equation}\notag
	\begin{tikzcd}
		{\Ext_{\boldsymbol{\pBan}}^n(E,Z)} \arrow[r, "\omega^n"] \arrow[d, "\gamma_*"'] & {\Ext_{\boldsymbol{\pBan}}^{n+1}(E,Y)} \arrow[d, "\alpha_*"] \\
		{\Ext_{\boldsymbol{\pBan}}^n(E,Z')} \arrow[r, "\omega'^n"] & {\Ext_{\boldsymbol{\pBan}}^{n+1}(E,Y')}
	\end{tikzcd}
\end{equation}
Let $[\mathfrak{Z}] \in \Ext^n_{\boldsymbol{\pBan}}(E,Z)$ be represented by the $n$-extension 
\[ \mathfrak{Z}: 0 \to Z \to Z_n \to \cdots \to Z_1 \to E \to 0.\]
Evaluating both compositions on $[\mathfrak{Z}]$ yields:
\begin{itemize}
	\item $(\alpha_* \circ \omega^n)([\mathfrak{Z}]) = \alpha_*([\mathfrak{X} \mathfrak{Z}]) = [\alpha(\mathfrak{X} \mathfrak{Z})]$,
	\item $(\omega'^n \circ \gamma_*)([\mathfrak{Z}]) = \omega'^n([\gamma\mathfrak{Z}]) = [\mathfrak{X}' (\gamma\mathfrak{Z})]$.
\end{itemize}
Since the push-out operation along $\alpha$ acts exclusively on the first object $Y$ of $\mathfrak{X}$, push-out and splicing commute, giving $\alpha(\mathfrak{X} \mathfrak{Z}) \equiv (\alpha\mathfrak{X}) \mathfrak{Z}$. Furthermore, by the universal property of the push-out $\alpha\mathfrak{X}$, the map $\beta: X \to X'$ induces a unique morphism of short exact sequences $(\Id_{Y'}, \overline{\beta}, \gamma): \alpha\mathfrak{X} \to \mathfrak{X}'$. Splicing this morphism with $\mathfrak{Z}$ yields a morphism of $(n+1)$-extensions from $(\alpha\mathfrak{X}) \mathfrak{Z}$ to $\mathfrak{X}' \mathfrak{Z}$. Combined with the canonical morphism $\mathfrak{X}' \mathfrak{Z} \to \mathfrak{X}' (\gamma\mathfrak{Z})$ induced by $\gamma$, we obtain a chain of morphisms of $(n+1)$-extensions:
\begin{equation}\notag
	(\alpha\mathfrak{X}) \mathfrak{Z} \longrightarrow \mathfrak{X}' \mathfrak{Z} \longrightarrow \mathfrak{X}' (\gamma\mathfrak{Z})
\end{equation}
which restrict to $\Id_{Y'}$ on $Y'$ and $\Id_E$ on $E$. Consequently, $(\alpha\mathfrak{X}) \mathfrak{Z}$ and $\mathfrak{X}' (\gamma\mathfrak{Z})$ represent the same Yoneda extension class in $\Ext^{n+1}_{\boldsymbol{\pBan}}(E,Y')$. Hence, $\alpha_* \omega^n = \omega'^n \gamma_*$, completing the proof.
\end{proof}

\begin{prop}\label{th:eff}
	Fix $E$ a $p$-Banach space. For each $n \in \mathbb N$, the functor $\Ext^n_{\pBan}(E,-): \pBan \rightsquigarrow \textbf{V}$ is effaceable.
\end{prop}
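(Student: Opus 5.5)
The plan is to verify the definition of effaceability directly: given a $p$-Banach space $X$, we need one strict monomorphism $u\colon X \to I$ such that the induced map $\Ext^n_{\pBan}(E,u)\colon \Ext^n_{\pBan}(E,X)\to \Ext^n_{\pBan}(E,I)$ is zero. The natural choice is $u=\rho_X\colon X\to \mathcal{N}_E^X$ from the construction of the ``even bigger space''. Its advantage over $\rho_X^n\colon X\to\mathcal{N}_E^n(X)$ is that it does not depend on $n$, so a single monomorphism effaces all the functors $\Ext^n_{\pBan}(E,-)$, $n\ge 1$, at once. (For the statement as written, with $n$ fixed, $\rho^n_X$ would already suffice by Theorem \ref{0}.)

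The first step is to check that $\rho_X$ is a strict monomorphism in the sense of Proposition \ref{prop:pban_quasi_abelian}\,ii), i.e. an isomorphic embedding with closed range. The paper asserts this when defining $\rho_X$, and it follows from $\rho_X=\iota_n\rho^n_X$ once the $\iota_n$ and $\rho_X^n$ are known to be into isomorphisms. The closed-range part then comes for free: an into isomorphism between complete spaces has complete, hence closed, image.

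The second step is the computation itself. By the definition of the functor $\Ext^n_{\pBan}(E,-)$ on morphisms, $\Ext^n_{\pBan}(E,\rho_X)[\mathfrak{Z}]=[\rho_X\mathfrak{Z}]$, the push-out along $\rho_X$. Theorem \ref{equal0} says that this class vanishes for every $[\mathfrak{Z}]\in\Ext^n_{\pBan}(E,X)$, so $\Ext^n_{\pBan}(E,\rho_X)=0$. Since $X$ was arbitrary, $\Ext^n_{\pBan}(E,-)$ is effaceable for each $n\in\mathbb{N}$. Additivity of the functor is part of the $\delta$-functor proposition already established, so it needs no separate check.

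The only genuine obstacle is the first step, namely that $\rho^n_X$ and $\iota_n$ really are isomorphic embeddings; this amounts to lower estimates for the quotient norms modulo $X_A$ and $V_X$. My approach would be to define an explicit bounded left inverse. On $\left(\oplus_{\alpha} Z_{1,\alpha}\right)_p$, one cannot simply add coordinates, because each $Z_{1,\alpha}$ is an abstract space rather than $X$. I would therefore first pass to the push-out of all the $j_\alpha$ simultaneously, which is exactly the quotient defining $\mathcal{N}^n_E(X)$. Then I would show that $\|\iota_\alpha j_\alpha x\|\ge c\|x\|$ by testing against continuous functionals, or more generally operators into a fixed space, that agree on all the copies $j_\alpha(X)$.

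If this becomes delicate because the $j_\alpha$ need not extend, I would fall back on the fact, asserted in the proof of Theorem \ref{0} and taken as given, that $\rho^n_X$ is an into isomorphism. The analogous property of $\iota_n$ and $\rho_X$ for $\mathcal{N}_E^X$ is treated the same way, with a single index $n$ in place of $\alpha$.
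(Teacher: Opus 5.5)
Your proposal is correct and is the paper's own argument: take $u=\rho_X\colon X\to\mathcal{N}_E^X$, use that it is a strict monomorphism by construction, and apply Theorem \ref{equal0} to get $\Ext^n_{\pBan}(E,\rho_X)=0$.

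On your side question about the embedding, testing against functionals is unavailable for $p<1$, since the dual can be trivial. The normalization $\|j_\alpha\|=1$ alone is also not enough, because $\|\rho^n_X x\|\le\inf_\beta\|j_\beta x\|$. What does work is choosing the representatives with every $j_\alpha$ isometric and then estimating directly. A finite combination of the $x_{\alpha,\alpha'}$ has coordinates $j_\beta(y_\beta)$ with $\sum_\beta y_\beta=0$. The distance to $j_\alpha x$ (placed in coordinate $\alpha$) therefore has $p$-th power $\|x-y_\alpha\|^p+\sum_{\beta\neq\alpha}\|y_\beta\|^p\ge\|x-y_\alpha\|^p+\|y_\alpha\|^p\ge\|x\|^p$. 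Hence $\rho^n_X$ is isometric, and the same computation handles $\iota_n$.
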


\begin{proof}
	For each $p$-Banach space $X$, by Theorem \ref{equal0}, it is enough to take $I=\mathcal{N}_X^E$ and $u=\rho_X:X \to \mathcal{N}_X^E$.
\end{proof}

\begin{thm}\label{th:der_universal}
	The $\delta$-functor 
    $$\Ext_{\pBan}(E,-)=(\Ext^n_{\pBan}(E,-))$$ 
    is universal. Consequently, $\Ext_{\pBan}(E,-)$ is the right derived functor of $\mathcal L_{\pBan}(E,-)$.
\end{thm}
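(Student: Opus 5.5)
The plan is to apply Grothendieck's criterion (Proposition 2.2.1 of \cite{tohoku}, as recalled above) directly to the exact $\delta$-functor $\Ext_{\pBan}(E,-)$. By the proposition preceding Proposition \ref{th:eff}, the family $(\Ext^n_{\pBan}(E,-))_{n\ge 0}$ with the connecting maps $\omega^n$ (given by splicing) is a covariant exact $\delta$-functor on the quasi-abelian category $\pBan$, and $\Ext^0_{\pBan}(E,-)=\mathcal L_{\pBan}(E,-)$. By Proposition \ref{th:eff}, each $\Ext^n_{\pBan}(E,-)$ with $n\ge1$ is effaceable: for every $X$, the strict monomorphism $\rho_X:X\to\mathcal N_E^X$ satisfies $\Ext^n_{\pBan}(E,\rho_X)=0$ by Theorem \ref{equal0}. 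The cited proposition then yields universality at once. The second assertion follows from Definition \ref{def:delta-functors_derivado}: $\mathcal L_{\pBan}(E,-)$ is additive and left exact, because $\mathcal L(E,-)$ preserves kernels, and we have a universal $\delta$-functor whose degree-zero part is $\mathcal L_{\pBan}(E,-)$.

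Since Grothendieck's proposition is stated for abelian categories, I would also sketch why its proof carries over to our setting. The argument only uses that each strict short exact sequence yields an exact piece of the long sequence, together with the existence of effacing monomorphisms. Fix a $\delta$-functor $\mathcal G'$ and a natural transformation $f_0$. Build $g_n$ by induction on $n$. Given $g_n$ and an object $X$, use the strict short exact sequence
\[0\to X\xrightarrow{\rho_X}\mathcal N_E^X\to C_X\to 0,\qquad C_X:=\mathcal N_E^X/\rho_X(X),\]
which is strict by Proposition \ref{prop:pban_quasi_abelian}. Exactness gives
\[\Ext^n(E,\mathcal N_E^X)\to\Ext^n(E,C_X)\xrightarrow{\omega^n}\Ext^{n+1}(E,X)\to\Ext^{n+1}(E,\mathcal N_E^X),\]
and the last map vanishes by Theorem \ref{equal0}. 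Hence $\omega^n$ is surjective, with kernel equal to the image of $\Ext^n(E,\mathcal N_E^X)$. Define $g_{n+1}(X)$ by $g_{n+1}(\omega^n\xi)=\delta'_n(g_n\xi)$. This is well defined because $\delta'_n g_n$ kills the image of $\Ext^n(E,\mathcal N_E^X)$: by naturality of $g_n$ and the cochain-complex property of $\mathcal G'$, $\delta'_n\circ\mathcal G'_n(\pi)=0$. Uniqueness is forced by the same surjectivity of $\omega^n$.

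The main obstacle is checking that $g_{n+1}$ is natural in $X$ and commutes with the connecting maps of an arbitrary strict short exact sequence $0\to A\to B\to C\to 0$, not only the effacing ones. For naturality along $T:X\to Y$, I would use the functor $\mathcal N_E^{(-)}$ and the commutative square \eqref{square2}. This square induces a morphism of effacing sequences, and the $\delta$-functor axiom for both $\Ext$ and $\mathcal G'$ then gives naturality. The delicate point is independence from the chosen effacing embedding. Since all effacements can be taken to be the canonical ones $\rho_X$, and \eqref{square2} compares them functorially, this should be manageable. For an arbitrary sequence $0\to A\to B\to C\to 0$, I would try first to follow Grothendieck: take the effacement $\rho_A:A\to\mathcal N_E^A$ and push out along it to get $0\to A\to PO\to C\to 0$ mapping into it. Here the difficulty is that $\rho_A$ need not extend to $B$, since $\mathcal N_E^A$ is only relatively injective. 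Instead I would compare via the pushout sequence $0\to\mathcal N_E^A\to PO\to C\to0$ together with the morphism from $0\to A\to B\to C\to 0$, and then reduce commutativity to the already established effacing case using the $\delta$-functor naturality of both sides. This diagram chase is where care is needed.
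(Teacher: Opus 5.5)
Your first paragraph is exactly the paper's argument. Theorem \ref{th:der_universal} follows by applying Grothendieck's criterion to the exact $\delta$-functor $\Ext_{\pBan}(E,-)$, whose positive-degree components are effaced by $\rho_X\colon X\to\mathcal N_E^X$ (Proposition \ref{th:eff} via Theorem \ref{equal0}), and the derived-functor assertion then follows from Definition \ref{def:delta-functors_derivado} together with $\Ext^0_{\pBan}(E,-)=\mathcal L_{\pBan}(E,-)$.

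Your supplementary sketch goes beyond the paper, and its last step needs repair. Comparing with $0\to\mathcal N_E^A\to PO\to C\to 0$ gives nothing: naturality along $(\rho_A,\,\cdot\,,\Id_C)$ only informs you after applying $\rho_{A*}$, which kills $\Ext^{n+1}_{\pBan}(E,A)$, and $\mathcal G'_{n+1}(\rho_A)$ need not be injective. The standard move needs no extension of $\rho_A$ to $B$. Note that the composite $A\to\mathcal N_E^A\to PO$, which equals $A\to B\to PO$, is itself an effacing strict monomorphism. Then both $0\to A\to B\to C\to 0$ and $0\to A\to\mathcal N_E^A\to C_A\to 0$ map, with identity on $A$, into $0\to A\to PO\to PO/A\to 0$, and compatibility of $g_{n+1}$ with the connecting maps follows from naturality and the surjectivity of connecting maps of effacing sequences. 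Likewise, effacing $X$ by $(\rho_X,\rho_Y T)\colon X\to\mathcal N_E^X\oplus_p\mathcal N_E^Y$ gives naturality of $g_{n+1}$ along $T\colon X\to Y$ without relying on the functor $\mathcal N_E^{(-)}$.
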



\section{Classical (relative) derivation process of the functor $\mathcal{L}(E,-): \boldsymbol{\pBan} \rightsquigarrow \mathbf{V}$} \label{derived}

Fix $E$ a $p$-Banach space. We now show that for any other $p$-Banach space $X$, the space $\mathcal{N}_E^X$ defined by the functor $\mathcal{N}_E^{(-)}: \boldsymbol{\pBan} \rightsquigarrow \boldsymbol{\pBan}$ plays the role of a \emph{relative} injective $p$-Banach object. The word \emph{relative} emphasizes that $\mathcal{N}_E^X$ behaves as an injective object specifically with respect to the pair of spaces $(E,X)$. In other words, the derivation process for the operator functor $\mathcal{L}(E,-)$ must be carried out taking into account both $E$ and the target space $X$ on which the functor acts.

\par Given a short exact sequence $\mathfrak{X}:$
$\begin{tikzcd}[cramped, column sep=1.5em]
	0 \arrow[r] & Y \arrow[r, "j"] & X \arrow[r, "\pi"] & E \arrow[r] & 0
\end{tikzcd}$
in $\boldsymbol{\pBan}$, we can construct a commutative diagram of the form:
\begin{equation}\label{pbpb}
	\begin{tikzcd}
		{[\mathcal{N}_E^Y]}: & 0 \arrow[r] & Y \arrow[r, "{\rho_Y}", hook] & {\mathcal{N}_E^Y} \arrow[r] & {Q_E^Y} \arrow[r] & 0 \\
		{[\mathfrak{X}]=[\mathcal{N}_E^Y \theta]}: & 0 \arrow[r] & Y \arrow[r, "j", hook] \arrow[u, equal] & X \arrow[r, "\pi"] \arrow[u, "\iota", hook] & E \arrow[r] \arrow[u, "\theta", hook] & 0
	\end{tikzcd}
\end{equation}
The upper row of \eqref{pbpb} is called an \emph{$\mathcal{N}_E$-injective presentation of $Y$}.

Repeating this construction with the quotient space $Q_E^Y$ in place of $Y$, we obtain the spliced diagram:
\begin{equation}\notag
	\begin{tikzcd}
		0 \arrow[r] & Y \arrow[r, "\rho_Y"] & \mathcal{N}_E^Y \arrow[rd, "q^1_{Y}"] \arrow[rr] & & \mathcal{N}_E^{Q_E^Y} \arrow[r, "q_Y^2"] & Q_E^{Q_E^Y} \arrow[r] & 0 \\
		& & & Q_E^Y \arrow[ru, "\rho_Y^1", hook] & & &
	\end{tikzcd}
\end{equation}
where we set $Q_E^{Y,2} := Q_E^{Q_E^Y}$ to simplify notation. Iterating this procedure with successive quotient spaces $Q_E^{Y,k} := Q(Q_E^{Y,k-1})$, we arrive at the diagram:
\begin{equation}\label{res}
	\begin{tikzcd}
		0 \arrow[r] & Y \arrow[r, "\rho_Y"] & {\mathcal{N}_E^Y} \arrow[rd, "q_Y^1"] \arrow[rr] & & {\mathcal{N}_E^{Q_E^Y}} \arrow[rd, "q_Y^2"] \arrow[rr] & & {\mathcal{N}_E^{Q_E^{Y,2}}} \arrow[r] & \cdots \\
		& & & {Q_E^Y} \arrow[ru, "\rho_Y^1", hook] & & {Q_E^{Y,2}} \arrow[ru, "\rho_Y^2", hook] & &
	\end{tikzcd}
\end{equation}
A diagram of the form \eqref{res} is called an \emph{$\mathcal{N}_E$-injective resolution of $Y$}, and will be denoted by $Y \to \mathcal{N}_E^Y$.

\subsection{(Relative) derived functor of the functor $\mathcal{L}(E,-)$ in $\pBan$}
Let us consider the additive, covariant left-exact functor $\mathcal{L}(E,-):\pBan \rightsquigarrow \boldsymbol{V}$. Now, for any other $p$-Banach space $X$, we know that its $\mathcal{N}_E$-injective resolution can always be constructed. Then we can compute the classical derived functors of $\mathcal{L}(E,-)$ using $\cN_E$-injective resolutions instead of injective resolutions. The process is mostly analogously to the classical one so we will only sketch it.
\par Let us consider a $p$-Banach space $X$ and the $\mathcal{N}_E$-injective resolution $X \to \mathcal{N}_E^X$ of $X$. If we apply the functor $\mathcal{L}(E,-)$ on all the terms of $X \to \mathcal{N}_E^X$ we obtain the following diagram:
\begin{equation}
	\label{compx}
	\begin{tikzcd}[column sep=0.09cm]
		0 \arrow[rrr] &&& \cL(E,X) \arrow[rrr, "\rho_{X*}"] &&& {\cL(E,\mathcal{N}_E^X)} \arrow[rd, "q_{X*}^1"] \arrow[rr, "d_{X*}^1"] &                                             & {\cL(E,\mathcal{N}_E^{Q_E^X})} \arrow[rd, "q_{X*}^2"] \arrow[rr, "d_{X*}^2"] &                                             & {\cL(E,\mathcal{N}_E^{Q_E^{X,2}})}  \arrow[rrr] &&& \cdots \\ & & & &
		&                       &                                                   & {\cL(E,Q_E^X)} \arrow[ru, "\rho_{X*}^1", hook] &                                                                & {\cL(E,Q_E^{X,2})} \arrow[ru, "\rho_{X*}^2", hook] &                                               
	\end{tikzcd}
\end{equation}
Since the functor $\cL(E,-)$ is not right-exact, Diagram (\ref{compx}) is a cochain complex, and we will denote it $\cL(E, \mathcal{N}_E^X)^{\bullet}$. Therefore, the $n$-th right derived functor of $\cL(E,-)$ is then defined as the $n$-th homology group of the cochain complex (\ref{compx}):
\begin{equation}
	H^n(\cL(E, \mathcal{N}_E^X)^{\bullet})=\frac{\ker d_{n+1*}}{\Im d_{n*}}
\end{equation}
We define the $n$th-right derived functor of $\mathcal{L}(E,-)$ as 
\begin{align*}
	\mathcal{R}^n\cL(E, -):  \pBan & \rightsquigarrow \textbf{V} \\
	X& \mapsto \mathcal{R}^n\cL(E,X):=H^n(\cL(E, \mathcal{N}_E^X)^{\bullet})\\
	[T:X\to Y]& \mapsto\mathcal{R}^n \cL(E, T): H^n(\cL(E, \mathcal{N}_E^X)^{\bullet})\longrightarrow H^n(\cL(E, \mathcal{N}_E^Y)^{\bullet}),
\end{align*}
where the map $\mathcal{R}^n \cL(E, T): H^n(\cL(E, \mathcal{N}_E^X)^{\bullet})\longrightarrow H^n(\cL(E, \mathcal{N}_E^Y)^{\bullet})$ is defined as follows. Let $X \to \mathcal{N}_E^X$ and $Y \to \mathcal{N}_E^Y$ be $\mathcal{N}_E$-injective resolutions of $X$ and $Y$, respectively. Since $\mathcal{N}_E^{(-)}$ is a functor we can define a map $\mathcal{N}_E^X \to \mathcal{N}_E^Y$, and by Proposition \ref{commu}, each square of the following diagram is commutative:

\begin{equation}\label{prev}
	\begin{tikzcd}
		0 \arrow[r] & X \arrow[r, "\rho_X"] \arrow[dd, "T"] & \mathcal{N}_E^X \arrow[rd, "q_X^1"] \arrow[rr, "d^1_X"] \arrow[dd, "T_1"] &                                                                                & \mathcal{N}_E^{Q_E^X} \arrow[rd, "q^2_X"] \arrow[rr, "d^2_X"] \arrow[dd, "T_2"] &                                                                                & \mathcal{N}_E^{Q_E^{X,2}} \arrow[r] \arrow[dd, "T_3"] & \cdots \\
		&                                       &                                                                            & Q_E^X \arrow[ru, "\rho^1_X", hook] \arrow[dd, "\overline{T_1}", near start, dashed] &                                                                                       & Q_E^{X,2} \arrow[ru, "\rho^2_X", hook] \arrow[dd, "\overline{T_2}", dashed, near start] &                                                         &        \\
		0 \arrow[r] & Y \arrow[r, "\rho_Y"]                 & \mathcal{N}_E^Y \arrow[rd, "q^1_Y"] \arrow[rr, "d^1_Y", near start]                   &                                                                                & \mathcal{N}_E^{Q_E^Y} \arrow[rd, "q^2_Y"] \arrow[rr, "d^2_Y", near start]                   &                                                                                & \mathcal{N}_E^{Q_E^{Y,2}} \arrow[r]                   & \cdots \\
		&                                       &                                                                            & Q_E^Y \arrow[ru, "\rho^{1}_Y", hook]                                    &                                                                                       & Q_E^{Y,2} \arrow[ru, "\rho^2_Y", hook]                                      &                                                         &       
	\end{tikzcd}
\end{equation}
Let us observe that $T_1=\mathcal{N}_E^{T}$ and $T_i=\mathcal{N}_E^{\overline{T_{i-1}}}$, for $i\geq 2$.
For each $i \in \mathbb{N}$, the functor $\cL(E,-)$ assigns  to each operator $T_i$ the map $\cL(E,T_i)=T_{i*}$. Therefore, we get the diagram:

{\small \begin{equation}\label{prev2}
		\begin{tikzcd}[column sep=0.33cm]
			0 \arrow[r] & \cL(E,X) \arrow[r, "\rho_{X*}"] \arrow[dd, "T_*"] & \cL(E,\mathcal{N}_E^X) \arrow[rd, "q_{X*}^1"] \arrow[rr, "d_{X*}^1"] \arrow[dd, "T_{1*}"] &                                                                                & \cL(E,\mathcal{N}_E^{Q_E^X}) \arrow[rd, "q_{X*}^2"] \arrow[rr, "d_{X*}^2"] \arrow[dd, "T_{2*}"] &                                                                                & \cL(E,\mathcal{N}_E^{Q_E^{X,2}}) \arrow[dd, "T_{3*}"] \cdots \\
			&                                       &                                                                            & \cL(E,Q_E^X) \arrow[ru, "\rho_{X*}^1", hook] \arrow[dd, "\overline{T_{1*}}", near start, dashed] &                                                                                       & \cL(E,Q_E^{X,2}) \arrow[ru, "\rho_{X*}^2", hook] \arrow[dd, "\overline{T_{2*}}", near start, dashed] &                                                         &        \\
			0 \arrow[r] & \cL(E,Y) \arrow[r, "\rho_{Y*}"]                 & \cL(E,\mathcal{N}_E^Y) \arrow[rd, "q_{Y*}^1"] \arrow[rr, "d_{Y*}^1", near start]                   &                                                                                & \cL(E,\mathcal{N}_E^{Q_E^Y}) \arrow[rd, "q_{Y*}^2"] \arrow[rr, "d_{Y*}^2", near start]                   &                                                                                & \cL(E,\mathcal{N}_E^{Q_E^{Y,2}})                     \cdots \\
			&                                       &                                                                            & \cL(E,Q_E^Y) \arrow[ru, "\rho^{1}_{Y*}", hook]                                    &                                                                                       & \cL(E,Q_E^{Y,2}) \arrow[ru, "\rho_{Y*}^2", hook]                                      &                                                         &       
		\end{tikzcd}
\end{equation}}

Of course, since the functor $\mathcal{L}(E,-)$ is not right-exact, both upper and lower rows of Diagram (\ref{prev2}) are cochain complexes, and it is easy to check that the map $T_{n*}$ satisfies 
\[ T_{n*}[\ker d_{X*}^{n+1}] \subseteq \ker d_{Y*}^{n+1} \quad , \quad T_{n*}[\Im d^n_{X*}]\subseteq \Im d_{X*}^n\]
Hence $T_{n*}$ induces a linear map $\mathcal R^n\cL(E,X)$ between $H^n(\cL(E, \cN_E^X)$ and $H^n(\cL(E, \cN_E^Y)$.

\subsection{Recovering the $\Ext^n_{\boldsymbol{\pBan}}$ functors}

Since $\mathcal{L}(E,-)$ is left-exact, it preserves strict monomorphisms, and therefore the maps $\rho_{X*}^n$ are injective. Consequently,
\[
	\ker d_{X*}^{n+1} = \mathrm{Im}\, \rho_{X*}^n = \rho_{X*}^n\big(\mathcal{L}(E,Q_E^X)\big).
\]
On the other hand, $\mathrm{Im}\, d_{X*}^n = \mathrm{Im}(\rho_{X*}^n q_{X*}^n) = \rho_{X*}^n \big[q_{X*}^n\big(\mathcal{L}(E,\mathcal{N}_E^{Q_E^X})\big)\big]$. Hence, the $n$-th homology space admits a representation of the form:
\[
	H^n\big(\mathcal{L}(E, \mathcal{N}_E^X)^{\bullet}\big) = \frac{\mathcal{L}(E,Q_E^X)}{q_{X*}^n\big(\mathcal{L}(E,\mathcal{N}_E^{Q_E^X})\big)} = \frac{\mathcal{L}(E,Q_E^X)}{\sim}.
\]

Let us first identify the first homology space $H^1\big(\mathcal{L}(E, \mathcal{N}_E^X)^{\bullet}\big)$ with $\Ext_{\boldsymbol{\pBan}}(E,X)$. Given an operator $T: E \to Q_E^X$, we can associate the pullback sequence of the diagram:
\[
\begin{tikzcd}
	0 \arrow[r] & X \arrow[r] & \mathcal{N}_E^X \arrow[r] & Q_E^X \arrow[r] & 0 \\
	0 \arrow[r] & X \arrow[u, equal] \arrow[r] & PB \arrow[u] \arrow[r] & E \arrow[u, "T"'] \arrow[r] & 0
\end{tikzcd}
\]
It is straightforward to verify that if $T \sim T'$, the corresponding exact sequences are equivalent by virtue of the pullback splitting criterion. Conversely, given an extension class in $\Ext_{\boldsymbol{\pBan}}(E,X)$ represented by
\[
\begin{tikzcd}
	0 \arrow[r] & X \arrow[r] & Z \arrow[r] & E \arrow[r] & 0
\end{tikzcd}
\]
we can fit it into a pullback diagram:
\[
\begin{tikzcd}
	0 \arrow[r] & X \arrow[r] & \mathcal{N}_E^X \arrow[r] & Q_E^X \arrow[r] & 0 \\
	0 \arrow[r] & X \arrow[u, equal] \arrow[r] & Z \arrow[u, hook, "S"] \arrow[r] & E \arrow[u, hook, "T"'] \arrow[r] & 0
\end{tikzcd}
\]
Furthermore, if $[\mathfrak{Z}']$ is another exact sequence corresponding to an operator $T': E \to Q_E^X$, then $T - T'$ admits a lifting to $\mathcal{N}_E^X$. Hence, the assignment $[\mathfrak{Z}] \mapsto [T]$ is well-defined.

To identify the homology groups for $n > 1$, we make use of the long homology sequence:
\begin{equation}\label{long}
	\begin{tikzcd}[column sep=1cm, row sep=.75cm]
		0 \arrow[r] & \mathcal{L}(E,X) \arrow[r, "\rho_{X*}"] & \mathcal{L}(E,\mathcal{N}_E^X) \arrow[r, "\pi_{X*}"]
		\arrow[dl, phantom, ""{coordinate, name=Z}]
		& \mathcal{L}(E,Q_E^X) \arrow[dll,
		"\omega_0" description, rounded corners=6pt,
		to path={ -- ([xshift=6.5ex]\tikztostart.east)
			|- (Z) [very near end] \tikztonodes
			-| ([xshift=-3ex]\tikztotarget.west)
			-- (\tikztotarget)}] \\
		& \Ext_{\boldsymbol{\pBan}}(E,X) \arrow[r, "\rho_{X*}"] & \Ext_{\boldsymbol{\pBan}}(E, \mathcal{N}_E^X) \arrow[r, "\pi_{X*}"]
		\arrow[dl, phantom, ""{coordinate, name=Z}]
		& \Ext_{\boldsymbol{\pBan}}(E, Q_E^X) \arrow[dll,
		"\omega_1" description, rounded corners=6pt,
		to path={ -- ([xshift=3ex]\tikztostart.east)
			|- (Z) [very near end] \tikztonodes
			-| ([xshift=-3ex]\tikztotarget.west)
			-- (\tikztotarget)}] \\
		& \Ext^2_{\boldsymbol{\pBan}}(E,X) \arrow[r, "\rho_{X*}"] & \Ext^2_{\boldsymbol{\pBan}}(E, \mathcal{N}_E^X) \arrow[r, "\pi_{X*}"]
		\arrow[dl, phantom, ""{coordinate, name=Z}]
		& \Ext^2_{\boldsymbol{\pBan}}(E, Q_E^X) \arrow[dll,
		"\omega_2" description, rounded corners=6pt,
		to path={ -- ([xshift=3ex]\tikztostart.east)
			|- (Z) [very near end] \tikztonodes
			-| ([xshift=-3ex]\tikztotarget.west)
			-- (\tikztotarget)}] \\
		& \cdots
	\end{tikzcd}
\end{equation}
By Theorem~\ref{equal0}, for any element $[\mathfrak{Z}] \in \Ext^{n+1}_{\boldsymbol{\pBan}}(E,X)$, we have $\rho_{X*}([\mathfrak{Z}]) = 0$. Since the sequence \eqref{long} is exact, it follows that $\mathrm{Im}\, \omega_{n+1} = \ker \rho_{X*} = \Ext^{n+1}_{\boldsymbol{\pBan}}(E,X)$ for every $n \in \mathbb{N}$; that is, $\omega_{n+1}$ is surjective. From this fact, we obtain the \emph{dimension reduction formula}:

\begin{thm}[Dimension reduction formula]\label{reductionformula}
	For every $n \ge 1$, there is a natural isomorphism
	\begin{equation}\label{eq:red_formula}
		\Ext_{\boldsymbol{\pBan}}^{n}(E,X) \simeq \Ext^{n-1}_{\boldsymbol{\pBan}}(E,Q_E^X).
	\end{equation}
\end{thm}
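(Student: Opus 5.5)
The plan is to read the isomorphism off the long homology sequence \eqref{long} attached to the $\mathcal{N}_E$-injective presentation
\[
0\to X\xrightarrow{\rho_X}\mathcal{N}_E^X\xrightarrow{\pi_X} Q_E^X\to 0 ,
\]
and to take the connecting map $\omega_{n-1}$ as the candidate isomorphism. Concretely, $\omega_{n-1}$ sends a class $[\mathfrak{W}]\in\Ext^{n-1}_{\pBan}(E,Q_E^X)$ to the spliced class $[[\mathcal{N}_E^X]\,\mathfrak{W}]\in \Ext^{n}_{\pBan}(E,X)$. Naturality in $X$ comes for free: $\mathcal{N}_E^{(-)}$ is a functor, the square \eqref{square2} commutes, and so an operator $T\colon X\to Y$ induces a morphism of presentations. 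The proposition showing that $\Ext_{\pBan}(E,-)$ is a $\delta$-functor then gives that the connecting maps commute with that morphism.

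\textbf{Step 1: surjectivity.} The relevant portion of \eqref{long} is
\[
\Ext^{n-1}_{\pBan}(E,\mathcal{N}_E^X)\xrightarrow{\pi_{X*}}\Ext^{n-1}_{\pBan}(E,Q_E^X)\xrightarrow{\omega_{n-1}}\Ext^{n}_{\pBan}(E,X)\xrightarrow{\rho_{X*}}\Ext^{n}_{\pBan}(E,\mathcal{N}_E^X).
\]
By Theorem \ref{equal0}, $\rho_{X*}$ is zero in every degree $n\ge1$. Exactness then gives $\Im\omega_{n-1}=\Ext^n_{\pBan}(E,X)$, so $\omega_{n-1}$ is onto.

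\textbf{Step 2: injectivity.} By exactness, $\ker\omega_{n-1}=\pi_{X*}\Ext^{n-1}_{\pBan}(E,\mathcal{N}_E^X)$, so injectivity is equivalent to $\pi_{X*}=0$ in degree $n-1$.

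For $n=1$ this requires every operator $E\to Q_E^X$ to lift to $\mathcal{N}_E^X$, which fails in general. In that degree I would therefore read \eqref{eq:red_formula} exactly as in the computation of $H^1$ just above: $\Ext^0_{\pBan}(E,Q_E^X)$ is understood modulo the relation $T\sim T'$ iff $T-T'$ lifts. In that reading, $\omega_0$ induces the bijection already described via pull-backs.

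For $n\ge2$ I would cut a representative $\mathfrak{W}$ of a class in $\Ext^{n-1}_{\pBan}(E,\mathcal{N}_E^X)$ as $\mathfrak{W}=\mathfrak{W}_1\mathfrak{W}'$, where $\mathfrak{W}_1\colon 0\to\mathcal{N}_E^X\to W_1\to K\to0$ is its first short exact sequence. The push-out along $\pi_X$ is then $(\pi_X\mathfrak{W}_1)\mathfrak{W}'$. The aim is to show that $\pi_X$ extends to $W_1$, or at least that $\pi_X\mathfrak{W}_1$ becomes trivial after splicing with $\mathfrak{W}'$.

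\textbf{Main obstacle.} This vanishing is the hard point. Nothing in the construction of $\mathcal{N}_E^X$ effaces extensions whose left end is $\mathcal{N}_E^X$ itself; it only effaces extensions of $E$ by $X$. My first attempt would use the explicit quotient structure
\[
\mathcal{N}_E^X=\Big(\bigoplus_{m} \mathcal{N}^m_E(X)\Big)_p\Big/V_X .
\]
Each summand $\mathcal{N}^m_E(X)$ is built from the spaces $Z_{1,\alpha}$, which are themselves middle terms of extensions of $E$. I would try to show that $\pi_X$ restricted to each $\iota_m(\mathcal{N}^m_E(X))$ kills the relevant classes, and then assemble these in the $\ell_p$-sum.

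If this fails, the fallback is to enlarge the presentation: add to the index set of the construction the classes in $\Ext^{n-1}_{\pBan}(E,\mathcal{N}_E^X)$, so that $\pi_{X*}$ is effaced by design, and check that Theorem \ref{equal0} and the commutativity of \eqref{square2} survive the enlargement. Either way, once $\pi_{X*}=0$ in degree $n-1$, Steps 1 and 2 give the natural isomorphism \eqref{eq:red_formula}.
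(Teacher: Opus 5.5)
Your Step 1 is the entire argument the paper gives: it uses Theorem~\ref{equal0} and exactness of \eqref{long} to get that $\omega_{n-1}$ is onto, and then simply asserts the isomorphism. You are right that injectivity is the missing point. However, the step you leave open cannot be filled, because injectivity is false in general. The obstruction comes from the trivial class. The index $\alpha=0\in\Ext^1_{\pBan}(E,X)$ contributes a summand $Z_{1,0}\cong X\oplus_p E$ to $(\bigoplus_\alpha Z_{1,\alpha})_p$. Let $q_0\colon Z_{1,0}\to E$ be its quotient map and $s_0$ a section. The map $(z_\alpha)_\alpha\mapsto q_0(z_0)$ kills $X_A$, since $q_0j_0=0$. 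The induced map on $\mathcal{N}^1_E(X)$, applied to the first coordinate of $(\bigoplus_m\mathcal{N}^m_E(X))_p$, kills $V_X$. This gives $\Phi\colon\mathcal{N}_E^X\to E$ with $\Phi\rho_X=0$ and a right inverse $\sigma\colon E\to\mathcal{N}_E^X$ induced by $s_0$. Hence $\Phi=\psi\pi_X$ for some $\psi\colon Q_E^X\to E$. For every $\xi\in\Ext^{n-1}_{\pBan}(E,E)$, the element $\pi_{X*}\sigma_*\xi\in\Im\pi_{X*}=\ker\omega_{n-1}$ satisfies $\psi_*\pi_{X*}\sigma_*\xi=\xi$. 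So $\omega_{n-1}$ fails to be injective whenever $\Ext^{n-1}_{\pBan}(E,E)\neq0$. This happens for $n=1$ and every $E\neq0$ (take $\xi=\Id_E$). It also happens for $n=2$ with, e.g., $E=\ell_1$ and $0<p<1$, using the Ribe space $0\to\mathbb{R}\to R\to\ell_1\to0$. Taking $X=0$ rules out any isomorphism whatsoever. Indeed $\Ext^n_{\pBan}(E,0)=0$, whereas $E$ is complemented in $Q_E^0=\mathcal{N}_E^0$, so $\Ext^{n-1}_{\pBan}(E,Q_E^0)$ contains $\Ext^{n-1}_{\pBan}(E,E)$ as a direct summand. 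Your $n=1$ reinterpretation is therefore not a proof of that case but a correction of the statement.

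Neither of your strategies for $n\ge2$ can work. The first aims at a vanishing that is false. The fallback replaces $\mathcal{N}_E^X$ and $Q_E^X$ by other spaces, so it would prove a statement about different objects. It also does not close as described. As long as Theorem~\ref{equal0} holds for the enlarged object $\mathcal{N}'$, exactness gives $\ker\pi_{X*}=\Im\rho_{X*}=0$ in every positive degree. So $\pi_{X*}=0$ in degree $n-1$ forces $\Ext^{n-1}_{\pBan}(E,\mathcal{N}')=0$ outright, i.e.\ genuine acyclicity. Adding the classes of $\Ext^{n-1}_{\pBan}(E,\mathcal{N}_E^X)$ to the index set only kills their images under a further embedding, and it creates new classes with left end $\mathcal{N}'$. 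Nothing in your plan shows this process terminates. What the argument actually proves, in every degree $n\ge1$, is the natural exact sequence $\Ext^{n-1}_{\pBan}(E,\mathcal{N}_E^X)\to\Ext^{n-1}_{\pBan}(E,Q_E^X)\to\Ext^{n}_{\pBan}(E,X)\to0$. Equivalently, $\Ext^{n}_{\pBan}(E,X)\cong\Ext^{n-1}_{\pBan}(E,Q_E^X)/\pi_{X*}\Ext^{n-1}_{\pBan}(E,\mathcal{N}_E^X)$. That quotient form is the correct version of the formula, not only for $n=1$.
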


Applying \eqref{eq:red_formula} recursively yields the chain of natural isomorphisms:
\begin{align*}
	\Ext^{n}_{\boldsymbol{\pBan}}(E,X) &\simeq \Ext^{n-1}_{\boldsymbol{\pBan}}(E,Q_E^X) \simeq \cdots \simeq \Ext_{\boldsymbol{\pBan}}(E,Q_E^{X,n-1}) \\
	&= \frac{\mathcal{L}(E,Q_E^{X,n})}{q_{X*}^n\big(\mathcal{L}(E, \mathcal{N}_E^{Q_E^X})\big)} = H^n\big(\mathcal{L}(E,\mathcal{N}_E^X)^{\bullet}\big).
\end{align*}

This identity establishes the fundamental link between Yoneda $n$-extensions and the resolution-based derivation process in $\boldsymbol{\pBan}$. Consequently, for every fixed $E \in \boldsymbol{\pBan}$ and $n > 0$, the extension functor $\Ext^n_{\boldsymbol{\pBan}}(E,-)$ is canonically isomorphic to the $n$-th right derived functor $R^n\mathcal{L}(E,-)$ of the left-exact functor $\mathcal{L}(E,-)$. In particular, this identification confirms that the cohomology groups $H^n\big(\mathcal{L}(E,\mathcal{N}_E^X)^{\bullet}\big)$ depend solely on the spaces $E$ and $X$, remaining completely independent of the choice of relative injective resolution.

\section{What happens in $\QBan$?}\label{sec:QBan}
\par In this final section we explore the extendability of the constructions from Section~\ref{sec:relative-injective} to the category $\QBan$. On the one hand, there are no injective objects in $\QBan$, since there are none in $\pBan$ for any $0<p \leq 1$. But, in contrast, since every quasi-Banach space is a p-Banach space for some $0<p \leq 1$, one might naturally expect that the constructions of Section \ref{main} could be carried out in $\QBan$ with minimal adaptations. However, the principal obstacle when we go from $\pBan$ to $\QBan$ is that, in general, (infinite) limits and colimits fail to exist in $\QBan$. To address this obstacle, we divide the construction of a ``relative injective object'' in $\QBan$ into several cases according to the length of the extensions. First, consider the case $n=1$ and let
\[ 
\begin{tikzcd}
    \mathfrak{X}: & 0 \arrow[r] & Y \arrow[r] & X \arrow[r] & Z \arrow[r] & 0 
\end{tikzcd}
\]
be a strict short exact sequence of quasi-Banach spaces. Thus, $Y$ is a $p_1$-Banach space and $Z$ is a $p_2$-Banach space, with $0 < p_1, p_2 \le 1$. The central question now is which value of $p$ can be associated with the middle quasi-Banach space $X$.
By a well-known result of Kalton \cite{Kalton}, there exists a function $f(p_1,p_2)$ such that $X$ is an $f_1(p_1,p_2)$-Banach space. Explicitely, one can take for every $\varepsilon>0$, 
\[ 
f_1(p_1,p_2) = \begin{cases} 
\min\{p_1,p_2\} & \text{if } p_1 \neq p_2, \\ 
p_1 - \varepsilon & \text{if } p_1 = p_2. 
\end{cases} 
\]
Consequently, the construction described in Section~\ref{sec:relative-injective} also works in $\QBan$ for $n=1$. Precisely, given a quasi-Banach space $E$, for any quasi-Banach space $X$ there exists $p \in (0,1]$ such that every middle space of an extension in $\Ext_{\QBan}(E,X)$ is isomorphic to a $p$-Banach space. Therefore, Theorem~\ref{0} can be directly improved as follows:

\begin{thm}
Let $E$ be a quasi-Banach space. For every quasi-Banach space $X$, there exists a quasi-Banach space $\mathcal{N}_E^1(X)$ and an isomorphic embedding $\rho^1_X: X \to \mathcal{N}_E^1(X)$ such that $[\rho_X^1 \mathfrak{Z}] = 0$ for every element $[\mathfrak{Z}] \in \Ext_{\QBan}(E,X)$.
\end{thm}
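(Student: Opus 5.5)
The plan is to reduce the statement to Theorem~\ref{0}, working inside a single category $\boldsymbol{\pBan}$ for a suitable $p$. Since $E$ and $X$ are quasi-Banach spaces, the Aoki--Rolewicz theorem lets us renorm them as $p_2$-Banach and $p_1$-Banach spaces respectively. Set $p := f_1(p_1,p_2)$ with a fixed choice of $\varepsilon$ in the case $p_1=p_2$. By Kalton's result, for every strict short exact sequence
\[
0 \to X \xrightarrow{j_\alpha} Z_\alpha \to E \to 0
\]
in $\QBan$, the middle space $Z_\alpha$ admits an equivalent $p$-norm. The key point is that $p$ depends only on $p_1$ and $p_2$, not on $\alpha$. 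Also $p\le \min\{p_1,p_2\}$, so $X$ and $E$ are themselves $p$-Banach spaces.

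\textbf{Step 1: pass to $\boldsymbol{\pBan}$ and check the comparison map.} Every element $\alpha \in A=\Ext_{\QBan}(E,X)$ is represented by a $1$-extension whose three terms are $p$-Banach. Choose a representative for each $\alpha$. Renorm $Z_\alpha$ with a $p$-norm and rescale so that $\norm{j_\alpha}=1$; this is possible because $j_\alpha$ is an isomorphic embedding.

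The choice of $p$-norm on $Z_\alpha$ does not affect equivalence classes, since equivalence only uses operators, and the norm change is an isomorphism. Hence the set $A$ coincides with $\Ext^1_{\boldsymbol{\pBan}}(E,X)$. More precisely, the natural map $\Ext^1_{\boldsymbol{\pBan}}(E,X)\to \Ext_{\QBan}(E,X)$ is a bijection:
\begin{itemize}
\item it is surjective by Kalton's result;
\item it is injective because $\boldsymbol{\pBan}$ is a full subcategory of $\QBan$, so an equivalence realized in $\QBan$ already lives in $\boldsymbol{\pBan}$.
\end{itemize}

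\textbf{Step 2: build the space and the embedding.} Define
\[
\mathcal{N}_E^1(X)=\left(\oplus_{\alpha\in A} Z_\alpha\right)_p\big/X_A
\]
exactly as in Section~\ref{sec:relative-injective}. This is a $p$-Banach space, hence a quasi-Banach space. Let $\rho^1_X=\iota_\alpha j_\alpha$, which is independent of $\alpha$. Theorem~\ref{0} then shows two things:
\begin{itemize}
\item $\rho^1_X$ is an isomorphic embedding;
\item the push-out $\rho^1_X\mathfrak{Z}$ splits, because $\rho^1_X$ extends through $j_\alpha$ via $\iota_\alpha$.
\end{itemize}

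It remains to see that splitting in $\boldsymbol{\pBan}$ implies $[\rho^1_X\mathfrak{Z}]=0$ in $\Ext_{\QBan}(E,\mathcal{N}_E^1(X))$. This holds because the push-out space is the same quotient in both categories, and a splitting operator is a morphism in both.

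\textbf{Main obstacle.} The delicate step is making sure the $\ell_p$-sum is legitimate, that is, that a \emph{uniform} $p$ works for all middle spaces at once. In $\QBan$ there is no infinite coproduct, and only the uniform exponent from Kalton's theorem rescues the construction.

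A second concern is the constants. Each $Z_\alpha$ is only isomorphic to a $p$-Banach space, with renorming constants that may vary with $\alpha$. This is harmless, because after renorming we work with genuine $p$-norms and normalize $\norm{j_\alpha}=1$, so no uniform bound on the renorming constants is needed.

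The proof of Theorem~\ref{0} also needs $\iota_\alpha$ to be an into isomorphism. Here I would verify the estimate $\operatorname{dist}(z,X_A)\ge c\norm{z}$ for $z\in Z_\alpha$ with a constant $c$ uniform in $\alpha$, and I expect this to reduce to the normalization $\norm{j_\alpha}=1$, just as in the $\boldsymbol{\pBan}$ argument.
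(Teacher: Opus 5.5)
Your route is the paper's own. Kalton's three-space theorem gives one exponent $p$ valid for every middle space, and then the $\ell_p$-sum/quotient construction of Section~\ref{sec:relative-injective} is run inside $\pBan$. Your remarks on comparing $\Ext$ in $\pBan$ and $\QBan$, and on the splitting being a $\QBan$-morphism, are fine.

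\textbf{The gap.} The problem is the step you postpone at the end, which the paper's proof of Theorem~\ref{0} also passes over. The normalization $\norm{j_\alpha}=1$ does \emph{not} make $\rho^1_X$ (or $\iota_\alpha$) bounded below, and your claim that no uniformity in $\alpha$ is needed is false. A typical element of the span of the $x_{\beta,\beta'}$ is $(j_\beta(u_\beta))_\beta$ with $(u_\beta)$ finitely supported and $\sum_\beta u_\beta=0$. The quotient $p$-norm therefore gives, for $x\in X$,
\[
\norm{\rho^1_X(x)}^p=\inf\Big\{\sum_{\beta\in A}\norm{j_\beta(y_\beta)}^p:\ (y_\beta)\ \text{finitely supported},\ \sum_{\beta}y_\beta=x\Big\}\le\inf_{\beta\in A}\norm{j_\beta(x)}^p .
\]
Set $c_\beta:=\inf_{\norm{x}=1}\norm{j_\beta(x)}$. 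If $\inf_\beta c_\beta=0$, then $\rho^1_X$ is not an isomorphic embedding. Neither is any $\iota_\alpha$: apply the estimate to $z=j_\alpha(x)$, for which $\norm{z}\ge c_\alpha\norm{x}$. Fixing $\norm{j_\alpha}=1$ only controls upper bounds, and $A$ is infinite as soon as $\Ext_{\QBan}(E,X)\neq 0$. So nothing in your argument rules this out; what is needed is precisely a uniform lower bound on the $j_\beta$.

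\textbf{The repair.} It is short, and it is what actually makes your remark about non-uniform renorming constants true: renorm each $Z_\alpha$ so that $j_\alpha$ is an \emph{isometry}. Since $p\le p_1$, the quasi-norm of $X$ is a $p$-norm. Let $\norm{\cdot}$ be any $p$-norm on $Z_\alpha$ with $\norm{j_\alpha(x)}\ge c_\alpha\norm{x}$, and define
\[
\nnorm{z}^p=\inf_{x\in X}\Big(\norm{x}^p+c_\alpha^{-p}\norm{z-j_\alpha(x)}^p\Big).
\]
This is an equivalent $p$-norm on $Z_\alpha$, since $\nnorm{z}\le c_\alpha^{-1}\norm{z}$ and $\norm{z}\le\norm{j_\alpha}\,\nnorm{z}$. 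Moreover $\nnorm{j_\alpha(y)}=\norm{y}$ for all $y\in X$.

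With all $j_\beta$ isometric, the infimum displayed above is at least $\norm{x}^p$, so $\rho^1_X$ is an isometry. For $\iota_\alpha$, take $z\in Z_\alpha$ and $(u_\beta)$ finitely supported with $\sum_\beta u_\beta=0$. Then
\[
\norm{z-j_\alpha(u_\alpha)}^p+\sum_{\beta\neq\alpha}\norm{u_\beta}^p\ \ge\ \norm{z-j_\alpha(u_\alpha)}^p+\norm{j_\alpha(u_\alpha)}^p\ \ge\ \norm{z}^p,
\]
so each $\iota_\alpha$ is an isometry too.

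Note that $[\rho^1_X\mathfrak{Z}]=0$ only needs $\rho^1_X=\iota_\alpha j_\alpha$ with $\iota_\alpha$ bounded. The substantive point is that $\rho^1_X$ is bounded below, and that is exactly where a uniform lower bound on the $j_\beta$ (for instance, the isometric normalization) is indispensable.
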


The main difficulty arises when we consider $n > 1$ and attempt to carry out the same argument: let $X$ be a $p$-Banach space and $Y$ an $r$-Banach space, with $0 < p, r \le 1$. Given $n \in \mathbb{N}$ with $n > 1$, is there a function $f_n(p,r) \in (0,1]$ such that any element in $\Ext^n_{\QBan}(X,Y)$ is the equivalence class of an exact sequence
\begin{equation} \label{eq:long_qban}
\begin{tikzcd}
    \mathfrak{Z}: & 0 \arrow[r] & Y \arrow[r] & Z_1 \arrow[r] & \cdots \arrow[r] & Z_n \arrow[r] & X \arrow[r] & 0 
\end{tikzcd}
\end{equation}
in which $Z_1, \dots, Z_n$ are $f_n(p,r)$-Banach spaces? A partial answer to this question is known provided the space $X$ in Diagram~\ref{eq:long_qban} is separable. Specifically, we have the following result:

\begin{thm}\label{thm:stability-qban}
Let $n \ge 1$, let $X$ be a separable $p_1$-Banach space, let $Y$ be a $p_2$-Banach space, and fix $0 < p < \min\{p_1, p_2\}$. Every element of $\Ext^n_{\QBan}(X,Y)$ has a representative in which all middle spaces are $p$-Banach spaces.
\end{thm}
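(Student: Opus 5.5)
We argue by induction on $n$, using the projective object $\ell_p$ available once we work inside $\pBan$. Fix $0<p<\min\{p_1,p_2\}$. Since $X$ is separable and $p\le p_1$, $X$ is also a $p$-Banach space. Hence there is a quotient map $Q\colon \ell_p\to X$, and its kernel $K_1=\ker Q$ is a closed subspace of $\ell_p$, so $K_1$ is a $p$-Banach space. Iterating, we obtain a strict exact sequence
\[
\cdots\to \ell_p\to \ell_p\to \ell_p \to X\to 0
\]
in which every syzygy $K_k$ is a $p$-Banach space. Moreover, $\ell_p$ is projective in $\QBan$ relative to strict epimorphisms onto $p$-Banach targets: a surjection $\pi\colon W\to V$ of quasi-Banach spaces, with $V$ a $p$-Banach space, admits bounded liftings of $\ell_p\to V$. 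This holds by the open mapping theorem, lifting the unit vectors with uniformly bounded preimages and using the $p$-convexity of the target of the sum. That last step needs care, since $W$ need only be an $r$-Banach space with $r<p$, so the series may not converge. This is where the case $n=1$, Kalton's result quoted above, enters.

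Base case $n=1$. Let $0\to Y\to Z\to X\to 0$ be an extension. Compose with $Q$ to get the pull-back $Z\,Q$. Alternatively, use that the extension is determined by a quasi-linear map $\Omega\colon X\to Y$. Because $X$ is separable, Kalton's theorem for $p$-Banach spaces ($p<\min\{p_1,p_2\}$) gives that the twisted sum $Y\oplus_\Omega X$ is $p$-normable. This is the function $f_1$ above, with the strict inequality absorbing the $\varepsilon$. So every class in $\Ext^1_{\QBan}(X,Y)$ has a $p$-Banach middle space.

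Inductive step. Given an $n$-extension $\mathfrak Z$ from $X$ to $Y$, cut it as $\mathfrak Z=\mathfrak L\,\mathfrak R$ with $\mathfrak R\colon 0\to W\to Z_n\to X\to 0$, where $W$ is merely quasi-Banach. Instead of controlling $W$, we replace $\mathfrak Z$ by an equivalent extension built from the $\ell_p$-resolution. Consider the strict sequence $0\to K_1\to \ell_p\to X\to 0$. Lift $Q$ through $Z_n\to X$, which is possible once $Z_n$ is handled as above. This gives a morphism of short exact sequences $(\phi,\cdot,\Id_X)$ into $\mathfrak R$, and hence $\mathfrak Z\sim (\mathfrak L\phi)\,(0\to K_1\to\ell_p\to X\to 0)$. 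Here $\mathfrak L\phi\in\Ext^{n-1}_{\QBan}(K_1,Y)$, and $K_1$ is a $p$-Banach space. The main difficulty is whether $K_1$ is separable: it is a subspace of the separable space $\ell_p$, so it is. By induction, applied with $p_1$ replaced by $p'$ for some $p<p'<\min\{p_1,p_2\}$ (choose $Q$ into $\ell_{p'}$ so that $K_1$ is a separable $p'$-Banach space), $\mathfrak L\phi$ has a representative whose middle spaces are $p$-Banach spaces. Splicing it with $0\to K_1\to\ell_{p'}\to X\to 0$, whose middle term $\ell_{p'}$ is $p$-Banach, gives the required representative of $[\mathfrak Z]$.

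The hardest step is the lifting of $Q$ through $Z_n\to X$ when $Z_n$ is only $r$-Banach for some small $r$. I would first try lifting through the separable quotient only, using the open mapping theorem and summing lifts with geometrically decaying coefficients $2^{-k}$ chosen so that the series converges in any quasi-Banach space. This gives a bounded lift on the dense span of the unit vectors, and it extends to $\ell_{p'}$ provided the lift is bounded. If that fails, I would instead apply Kalton's $n=1$ result to the pull-back $\mathfrak R\,Q$, which splits because $\ell_{p'}$ is a $\mathcal K$-space-type domain.
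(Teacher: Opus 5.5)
Your inductive step has a genuine gap where you lift $Q\colon\ell_{p'}\to X$ through $Z_n\to X$. For the representative $\mathfrak Z=\mathfrak L\mathfrak R$ you are handed, such a lift exists if and only if the pull-back $\mathfrak R Q$ splits, i.e.\ $[\mathfrak R Q]=0$ in $\Ext^1_{\QBan}(\ell_{p'},W)$. Nothing controls $W=\ker(Z_n\to X)$, and this vanishing fails in general.

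Concretely, let $X$ be infinite-dimensional and let the last piece of the representative be an $s$-projective presentation $0\to K\to\ell_s\to X\to0$ with $s<p'$. Then no lift exists. Take any operator $L\colon\ell_{p'}\to\ell_s$ and pass to a coordinatewise convergent subsequence of $(Le_k)$. If it did not converge in norm, the differences $L(e_{k_{2m}}-e_{k_{2m+1}})$ would, after a gliding hump, be small perturbations of disjoint blocks with norm bounded below. Summing $N$ of them forces $N^{1/s}\lesssim N^{1/p'}$, which is impossible. So $\{Le_k\}$, and hence $\{Qe_k\}$, is relatively compact. Since $X$ is $p'$-normed, $Q$ maps the unit ball of $\ell_{p'}$ into the $p'$-convex hull of $\{Qe_k\}$, which is totally bounded. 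This contradicts the open mapping theorem.

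For the same reason, your preliminary claim that $\ell_p$ lifts through every surjection onto a $p$-Banach space is false; the Kalton--Peck space $Z_p$ is another counterexample. The $2^{-k}$ device cannot help either. A linear lift is determined by lifts $z_k$ of $Qe_k$, and what is needed is $\|\sum a_k z_k\|\lesssim\|a\|_{p'}$, which an $r$-norm with $r<p'$ does not give. Kalton's $n=1$ theorem does not help, because it only makes $Z_n$ an $f_1(r,p_1)$-Banach space, where $r$ is the uncontrolled exponent of $W$. Finally, the $\mathcal K$-space property of $\ell_{p'}$ only concerns one-dimensional kernels, not an arbitrary $W$; $Z_{p'}$ shows $\Ext^1_{\QBan}(\ell_{p'},\ell_{p'})\neq0$.

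The missing idea is that you do not need $[\mathfrak RQ]=0$. You only need $[\mathfrak ZQ]=[\mathfrak L(\mathfrak RQ)]=0$ in $\Ext^n_{\QBan}(\ell_{p'},Y)$, where the target is now the controlled $p_2$-Banach space $Y$ with $p'<p_2$. This is exactly what the paper imports from Scholze: $\Ext^k_{\QBan}(\ell_r(\mathbb N),Y)=0$ for all $k\ge1$ when $r<p_2$. That deep input does not follow from Kalton's $n=1$ result.

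With it, exactness of the long sequence attached to $0\to K_1\to\ell_{p'}\to X\to0$ (in the first variable) gives what you need. The connecting map $\Ext^{n-1}_{\QBan}(K_1,Y)\to\Ext^n_{\QBan}(X,Y)$, splicing with the presentation, is surjective. So $[\mathfrak Z]$ equals the class of some spliced extension. This is an equality of classes realized by a zig-zag, not by a morphism from the presentation into your chosen representative.

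From that point on, your argument coincides with the paper's proof: the intermediate exponent $p'$, separability of $K_1$, the induction hypothesis for $\Ext^{n-1}(K_1,Y)$ with $p<\min\{p',p_2\}$, splicing, and $\ell_{p'}$ being $p$-Banach.
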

\begin{proof}
    We will do the proof by induction on the length $n$. For $n=1$ is just Kalton's theorem mentioned above. Let us suppose the result true for $n-1$, and prove it for $n$. Choose $r$ such that $p<r<\min\lbrace p_1, p_2 \rbrace$. Since $X$ is a $p_1$-Banach space, it is also a $r$-Banach space. Moreover, since $X$ is separable, we can take an $r$-projective presentation $X$ using the space $\ell_r(\mathbb{N})$
    \[ 
\begin{tikzcd}
    \mathfrak{P}: & 0 \arrow[r] & \ker \pi \arrow[r] & \ell_r(\mathbb{N}) \arrow[r, "\pi"] & X \arrow[r] & 0 
\end{tikzcd}
\]
in which the kernel $\ker \pi$ is a separable $r$-Banach space. Since $r <p_2$, due to a result to P. Scholze (see \cite[Theorem 1.1]{scholze2} and \cite[Remark 8.15]{scholze1}, cf.  \cite{scholze3}, which is a more suitable reformulation in $\QBan$), we know that $\Ext_{\QBan}(\ell_r(\mathbb{N}),Y)=0$, so that, the exact long homology sequence associated to the projective presentation of $X$ is the following:
\begin{equation}
    \begin{tikzcd}[column sep=1.5em]
\cdots \arrow[r] & {\Ext_{\QBan}^{n-1}(\ker \pi,Y)} \arrow[r, "\delta"] & {\Ext^n_{\QBan}(X,Y)} \arrow[r] & 0 \arrow[r] & {\Ext_{\QBan}^{n}(\ker \pi,Y)} \arrow[r] & \cdots
\end{tikzcd}
\end{equation}
which means that the connecting morphism $\delta$, which is defined by splicing with $[\mathfrak{P}]$, is surjective. That is to say, for every $[\mathfrak{Z}] \in \Ext^n_{\QBan}(X,Y)$ there exists $[\mathfrak{Z'}] \in \Ext_{\QBan}^{n-1}(\ker \pi,Y)$ such that $\delta([\mathfrak{Z'}])=[\mathfrak{Z}]$. Since $p < \min\lbrace r,p_2 \rbrace$, by the induction hypothesis, there exists a representative $[\mathfrak{W}] \in \Ext_{\QBan}^{n-1}(\ker \pi ,Y)$
\begin{equation}
    \notag
    \begin{tikzcd}
    \mathfrak{W}: & 0 \arrow[r] & Y \arrow[r] & W_1 \arrow[r] & \cdots \arrow[r] & W_{n-1} \arrow[r] & \ker \pi \arrow[r] & 0 
\end{tikzcd}
\end{equation}
such that $[\mathfrak{W}]=[\mathfrak{Z'}]$ whose all of its middle spaces $W_j$ are $p$-Banach spaces. Splicing $\mathfrak{W}$ with $\mathfrak{P}$ we obtain the extension
\begin{equation}
    \notag 
\begin{tikzcd}[column sep=1.5em]
{\mathfrak{W'}:} & 0 \arrow[r] & Y \arrow[r] & W_1 \arrow[r] & \cdots \arrow[r] & W_{n-1} \arrow[rd] \arrow[rr] &                     & \ell_r(\mathbb N) \arrow[r] & X \arrow[r] & 0 \\
                   &             &             &               &                  &                           & \ker \pi \arrow[ru] &                             &             &  
\end{tikzcd} 
\end{equation}
which represents $[\mathfrak{Z}]$. Finally, let us observe that, in $\mathfrak{W'}$, all the spaces $W_i$ are $p$-Banach spaces and $\ell_r(\mathbb{N})$ is an $r$-Banach space, with $p <r$, then they are all $p$-Banach spaces.
\end{proof}
The previous result allows an improvement of Theorem \ref{thm:stability-qban} to the following:
\begin{thm}\label{thm:eff_qban}
Let $E$ be a separable quasi-Banach space. For every quasi-Banach space $X$, there exists a quasi-Banach space $\mathcal{N}_E^X$ and an isomorphic embedding $\rho_X: X \to \mathcal{N}_E^X$ such that $[\rho_X \mathfrak{Z}] = 0$ for every element $[\mathfrak{Z}] \in \Ext_{\QBan}^n(E,X)$.
\end{thm}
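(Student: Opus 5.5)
The plan is to reproduce the constructions of Section~\ref{main} inside a single category $\pBan$ for a suitable fixed $p$, which Theorem~\ref{thm:stability-qban} now makes possible. Let $E$ be a separable $p_1$-Banach space and $X$ a $p_2$-Banach space; this is possible by the Aoki--Rolewicz theorem after renorming. Fix $0<p<\min\{p_1,p_2\}$. By Theorem~\ref{thm:stability-qban}, applied with $E$ in the role of the separable space, every class $\alpha\in A_n:=\Ext^n_{\QBan}(E,X)$ has a representative
\[0\to X\xrightarrow{j_\alpha} Z_{1,\alpha}\to\cdots\to Z_{n,\alpha}\to E\to 0\]
in which all middle spaces are $p$-Banach spaces. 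After renorming $Z_{1,\alpha}$ by an equivalent $p$-norm and rescaling, we may also assume $\|j_\alpha\|=1$ and that $j_\alpha$ is an isometric embedding, or at least uniformly bounded below. We can make this choice by replacing the $p$-norm on $Z_{1,\alpha}$ with the Minkowski functional of the $p$-convex hull of $j_\alpha(B_X)\cup B_{Z_{1,\alpha}}$.

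For each $n$ we then form $\mathcal N_E^n(X)=\big(\bigoplus_{\alpha\in A_n}Z_{1,\alpha}\big)_p/X_{A_n}$ exactly as before, with the map $\rho^n_X=\iota_\alpha j_\alpha$. Next we set $\mathcal N_E^X=\big(\bigoplus_n\mathcal N^n_E(X)\big)_p/V_X$ with $\rho_X=\iota_n\rho^n_X$. All these $\ell_p$-sums and quotients are $p$-Banach spaces, so they are in particular quasi-Banach spaces, and the obstruction that infinite colimits need not exist in $\QBan$ disappears because everything happens at a single exponent $p$. The vanishing $[\rho_X\mathfrak Z]=0$ then follows verbatim from the proofs of Theorems~\ref{0} and~\ref{equal0}. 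Pushing out the chosen representative along $\rho^n_X=\iota_\alpha j_\alpha$ produces a first short exact sequence that splits, since $\iota_\alpha$ extends $\rho^n_X$ to $Z_{1,\alpha}$. Hence the whole $n$-extension is trivial in $\Ext^n_{\QBan}(E,\mathcal N_E^X)$. That the chosen representative is equivalent to an arbitrary representative in $\QBan$ is harmless, because push-out respects equivalence of Yoneda extensions.

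The main obstacle is checking that $\rho_X$, and each $\rho^n_X$, is an isomorphic embedding. In Theorem~\ref{0} this is asserted without argument, and it is where the uniformity over $\alpha$ matters. For $x\in X$ we have to bound from below the distance in the $\ell_p$-sum from $\iota_\alpha j_\alpha(x)$ to the closed span of the elements $y_{\beta,\beta'}$. The first thing to try is a functional-free averaging argument of the kind used in $p$-Banach push-outs. We would define the bounded map $\Sigma:\big(\bigoplus Z_{1,\beta}\big)_p\to$ (something containing $X$) only on the subspace generated by $j_\beta(X)$, which is where the uniform lower bound on the $j_\beta$ is used. Alternatively, and more robustly, we would identify $\mathcal N^n_E(X)$ with the push-out of the family $\{j_\beta\}$ along the summation map $(\bigoplus X)_p\to X$. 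Push-outs of strict monomorphisms are strict monomorphisms in the quasi-abelian category $\pBan$, by Subsection~\ref{subsec:quasi-abelian}, so this identification would give that $\rho^n_X$ is an embedding with closed range. The same push-out description handles the passage from the $\rho^n_X$ to $\rho_X$.
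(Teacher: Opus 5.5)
Your proposal is correct and is essentially the argument the paper leaves implicit: Theorem~\ref{thm:stability-qban} lets you fix a single $p<\min\{p_1,p_2\}$ for all $n$, after which the $\ell_p$-sum and quotient constructions of Section~\ref{main} run verbatim in $\pBan$. Your push-out identification supplies the embedding argument the paper omits, since uniformly bounded-below $j_\alpha$ make every $\rho^n_X$ isometric and so also handle $\rho_X$; the only slip is the order of normalisation, because you must rescale first so that $\|j_\alpha x\|\ge\|x\|$ before taking the $p$-convex hull of $j_\alpha(B_X)\cup B_{Z_{1,\alpha}}$ (if $\|j_\alpha\|=1$ already holds, that hull is just $B_{Z_{1,\alpha}}$ and nothing changes).
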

Finally, using Theorem \ref{thm:eff_qban}, and the easy fact that the long homology sequence \eqref{eq:longhomology} can be analogously defined in $\QBan$, we get:
\begin{thm}
    Consider $E$ be a separable quasi-Banach space. The $\delta$-functor 
    $$\Ext_{\QBan}(E,-)=(\Ext_{\QBan}^n(E,-))$$ 
    is universal. Consequently, $\Ext_{\QBan}(E,-)$ is the right derived functor of $\mathcal{L}_{\QBan}(E,-)$.
\end{thm}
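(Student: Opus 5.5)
The plan is to mirror the proof of Theorem~\ref{th:der_universal} in $\QBan$. There are three steps: check that $\Ext_{\QBan}(E,-)$ is a covariant exact $\delta$-functor with $\Ext^0_{\QBan}(E,-)=\mathcal{L}_{\QBan}(E,-)$; show that each $\Ext^n_{\QBan}(E,-)$, $n\ge 1$, is effaceable; and then invoke Grothendieck's criterion \cite[Proposition 2.2.1]{tohoku}.

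\textbf{Step 1 ($\delta$-functor structure).} I will repeat the argument of the $\pBan$ proposition verbatim, replacing $\pBan$ by $\QBan$. The strict short exact sequences of $\QBan$ are again those with a closed-range embedding and a surjection, because the Open Mapping Theorem holds for quasi-Banach spaces, so Proposition~\ref{prop:pban_quasi_abelian} transfers. The remaining ingredients also transfer:
\begin{itemize}
\item the connecting maps $\omega^n$ are defined by splicing with the given sequence;
\item exactness of the long sequence \eqref{eq:longhomology} follows from the general Yoneda theory in exact categories \cite[Proposition 5.20]{exact_categ};
\item naturality of $\omega^n$ follows from the same chain of morphisms $(\alpha\mathfrak{X})\mathfrak{Z}\to\mathfrak{X}'\mathfrak{Z}\to\mathfrak{X}'(\gamma\mathfrak{Z})$ with fixed ends.
\end{itemize}
The only point to watch is that the push-outs and pull-backs used there are finite constructions, namely quotients of finite direct sums and closed subspaces of finite products. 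These exist in $\QBan$ even though infinite (co)limits may fail, so the whole argument stays within finite constructions.

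\textbf{Step 2 (effaceability).} This is the key step. For $n\ge1$ and a quasi-Banach space $X$, Theorem~\ref{thm:eff_qban} provides an isomorphic embedding $\rho_X:X\to\mathcal{N}_E^X$ with closed range, hence a strict monomorphism, satisfying $[\rho_X\mathfrak{Z}]=0$ for every $[\mathfrak{Z}]\in\Ext^n_{\QBan}(E,X)$. In other words, $\Ext^n_{\QBan}(E,\rho_X)=0$, which is exactly effaceability.

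I will double-check that a single object $\mathcal{N}_E^X$ works for every $n$. This is where separability of $E$ enters through Theorem~\ref{thm:stability-qban}. Fix $p<\min\{p_E,p_X\}$. Every class in every degree $n$ then has a representative whose middle spaces are all $p$-Banach spaces, so the $\ell_p$-sums in the construction of Section~\ref{sec:relative-injective} make sense. The same applies to the $\ell_p$-sum over $n$ that defines $\mathcal{N}_E^X$. This uniformity in $n$ is the main obstacle. If Theorem~\ref{thm:eff_qban} is read as giving a space depending on $n$, I would note that effaceability only requires, for each fixed $n$, some strict monomorphism $u$ with $\Ext^n(E,u)=0$. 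The degree-wise spaces $\mathcal{N}_E^n(X)$, built with the fixed $p$, therefore already suffice.

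\textbf{Step 3 (conclusion).} Since $\Ext^n_{\QBan}(E,-)$ is effaceable for every $n\ge1$, \cite[Proposition 2.2.1]{tohoku}, in its quasi-abelian form, shows that $\Ext_{\QBan}(E,-)$ is a universal $\delta$-functor. That criterion is proved by a dimension-shifting argument that uses only effaceing monomorphisms and the long exact sequences, both available here. Its degree-zero term is $\mathcal{L}_{\QBan}(E,-)$, which is left exact, so by Definition~\ref{def:delta-functors_derivado} it is the right derived functor of $\mathcal{L}_{\QBan}(E,-)$.
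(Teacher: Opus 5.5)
Your proposal is correct and follows the paper's route. The paper derives the result in one line from two facts: the effaceability given by Theorem~\ref{thm:eff_qban}, and the long exact $\Ext_{\QBan}$ sequence, i.e.\ the $\delta$-functor structure carried over from $\pBan$. It then invokes Grothendieck's criterion \cite[Proposition 2.2.1]{tohoku}; your remark that degree-wise effaceability suffices, so the spaces $\mathcal{N}_E^n(X)$ built with one fixed $p<\min\{p_E,p_X\}$ via Theorem~\ref{thm:stability-qban} already work, is a correct refinement of that argument.
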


\section*{Acknowledgements}
The author is very thankful to the BANEXT research group in Universidad de Extremadura, and also to Alberto Salguero-Alarcón and Pedro Tradacete for their valuable ideas, comments and suggestions.
\bibliographystyle{siam}

\begin{thebibliography}{99}

    \bibitem{buehler2010} T.~B{\"u}hler,
        \emph{Exact categories},
        Expos. Math. \textbf{28} (2010), no.~1, 1--69.
        
	\bibitem{hmbst} F. Cabello Sánchez, J. M. F. Castillo, \emph{Homological Methods in Banach Space Theory}, Cambridge Studies in Advanced Mathematics, 203, (2023).

	
	\bibitem{exact_categ} L. Frerick, D. Sieg, \emph{Exact categories in functional analysis}, Script (2010).
	
	\bibitem{tohoku} A. Grothendieck, \emph{Sur quelques points d'algèbre homologique}, Tôhoku Math. J., (2), 9 (2): 119–221, (1957). 
	%
	
	\bibitem{Kalton} N. J. Kalton, \emph{Convexity, type and the three-space problem}, Studia Math. (1981), 247-287. 
	
	\bibitem{fspace} N. J. Kalton, N. T. Peck, and J. W. Roberts, \emph{An F-space sampler,} Cambridge University Press, 1984.
	
	
	\bibitem{pothoven} K. Pothoven, \emph{Projective and injective objects in the category of Banach spaces}, Proc. Amer. Math. Soc. 22 (1969), 437--438. 
    
    \bibitem{quillen1973}
        D.~Quillen,
    \emph{Higher algebraic {K}-theory: {I}},
    in: Algebraic {K}-theory, {I}: {H}igher {K}-theories,
    Lecture Notes in Mathematics, vol. 341,
    Springer, Berlin, Heidelberg, 1973, pp. 85--147.

    \bibitem{schneiders1999}
J. P.~Schneiders,
\emph{Quasi-abelian categories and sheaves},
M{\'e}m. Soc. Math. Fr. (N.S.) \textbf{76} (1999), 1--134.

\bibitem{scholze1} P. Scholze, \emph{Lectures on Analytic Geometry}, \href{https://arxiv.org/abs/2605.03655}{arXiv:2605.03655 [math.CT, math.FA, math.NT].}


\bibitem{scholze2} P. Scholze, \emph{Liquid Tensor Experiment}, Exp. Math. 31 (2022), no. 2, 349--354.

\bibitem{scholze3} P. Scholze, \emph{Nonconvexity and discretization}, \href{https://mathoverflow.net/users/6074/peter-scholze}{https://mathoverflow.net/q/386796.}

\end{thebibliography}

\end{document}